\theoremstyle{plain} 
\newtheorem{theorem}{Theorem}[section]
\newtheorem{corollary}[theorem]{Corollary}
\newtheorem{lemma}[theorem]{Lemma}
\newtheorem{proposition}[theorem]{Proposition}
\newtheorem*{conjecture*}{Conjecture}
\theoremstyle{definition} 
\theoremstyle{definition} 
\theoremstyle{remark} 
\theoremstyle{remark} 
\newtheorem*{remark*}{Remark}
\numberwithin{equation}{section}
  \renewcommand\section{\@startsection {section}{1}{\z@}%
                                   {-\bigskipamount}%
                                   {\medskipamount}%
                                   {\large\bfseries
                                   \raggedright}}
  \renewcommand\subsection{\@startsection {subsection}{2}{\z@}%
                                   {-\medskipamount}%
                                   {\smallskipamount}%
                                   {\bfseries
                                   \raggedright}}
\newcommand{\limD}{\operatorname{Dlim}}
\renewcommand{\limD}{\mathop{\operatorname{Dlim}}}
\newcommand{\limsupD}{\mathop{\operatorname{Dlimsup}}}
\newcommand{\liminfD}{\mathop{\operatorname{Dliminf}}}
\newcommand{\uperp}{\{u\}^\perp}
\newcommand{\ffrown}{\text{\raisebox{3pt}[0pt][0pt]{$\frown$}}}
\renewcommand{\O}{\underset{\ffrown}{<}}
\renewcommand{\gg}{>\kern-2pt>}
\renewcommand{\ll}{<\kern-2pt<}
\renewcommand{\bar}{\overline}
\renewcommand{\S}{\operatorname{\mathsf{S}\!}}
\renewcommand{\S}{\mathsf{S}}
\newcommand{\V}{\mathsf{V}}
\renewcommand{\gg}{>\kern-2pt>}
\renewcommand{\ll}{<\kern-2pt<}
\newcommand{\dd}{\partial}
\renewcommand{\dd}{\operatorname{d}\!}
\renewcommand{\le}{\leqslant}
\renewcommand{\ge}{\geqslant}
\newcommand{\al}{\alpha}
\newcommand{\Ga}{\Gamma}
\newcommand{\ep}{\varepsilon}
\renewcommand{\th}{\theta}
\newcommand{\la}{\lambda}
\newcommand{\de}{\delta}
\newcommand{\vpi}{\varphi}
\newcommand{\B}{\mathfrak{B}}
\newcommand{\D}{\mathcal{D}}
\newcommand{\EE}{\mathcal{E}}
\newcommand{\Y}{\mathcal{Y}}
\newcommand{\LD}{\mathcal{L}\!\mathcal{D}}
\renewcommand{\LD}{\mathcal{L}{\kern -1.9pt}\mathcal{D}}
\renewcommand{\LD}{\mathcal{D}}
\renewcommand{\LD}{\mathcal{L}{\kern -4pt}\mathcal{C}}
\renewcommand{\LD}{\mathcal{R}{\kern -3pt}\mathcal{C}}
\newcommand{\ii}[1]{\mathrm{I}\!\left\{#1\right\}}
\renewcommand{\P}{\operatorname{\mathsf{P}}} 
\newcommand{\E}{\operatorname{\mathsf{E}}}
\newcommand{\R}{{\mathbb{R}}}
\newcommand{\C}{\mathbb{C}}
\newcommand{\tP}{{\tilde{P}}}
\newcommand{\txi}{{\tilde{\xi}}}
\newcommand{\tQ}{{\tilde{Q}}}
\newcommand{\tA}{{\tilde{A}}}
\newcommand{\tB}{{\tilde{B}}}
\newcommand{\teta}{{\tilde{\eta}}}
\newcommand{\A}{\mathcal{A}}
\newcommand{\inter}{\mathrm{int}\,}
\begin{document}


\title{Quantifying 
minimal non-collinearity among random points}


\author{Iosif Pinelis}

\address{Department of Mathematical Sciences\\
Michigan Technological University\\
Hough\-ton, Michigan 49931, USA\\
E-mail: ipinelis@mtu.edu}

\keywords{
Convex sets, random points, geometric probability theory, integral geometry, maximal angle, convergence in distribution, Steiner symmetrization, asymptotic approximations}

\subjclass[2010]{Primary 60C05, 60D05, 60F05; secondary 41A60, 52A22, 52A20, 52A38, 26B15, 28A75, 05C38, 62E20, 05C80}

%
%
%
%
%


\begin{abstract}
Let $\varphi_{n,K}$ denote the largest angle in all the triangles with vertices among the $n$ points selected at random in a compact convex subset $K$ of $\mathbb{R}^d$ with nonempty interior, where $d\ge2$.  
It is shown that the distribution of the random variable $\lambda_d(K)\,\frac{n^3}{3!}\,(\pi-\varphi_{n,K})^{d-1}$, where $\lambda_d(K)$ is a certain positive real number which depends only on the dimension $d$ and the shape of $K$,  
converges to the standard exponential distribution as $n\to\infty$. 
By using the Steiner symmetrization, it is also shown that $\lambda_d(K)$ -- which is referred to in the paper as the elongation of $K$ -- attains its minimum if and only if $K$ is a ball $B^{(d)}$ in $\mathbb{R}^d$. Finally, the asymptotics of $\lambda_d(B^{(d)})$ for large $d$ is determined.  
\end{abstract}

\maketitle

\tableofcontents

\section{
Summary
and discussion
}\label{intro} 

Let $K$ be a bounded convex subset of $\R^d$ with nonempty interior, for some natural $d\ge2$. Let $P_1,P_2,\dots$ be random points drawn independently and uniformly from $K$. For each natural $n$, let $T_n$ be the set of all subsets of the set $\{1,\dots,n\}$ of cardinality $3$. For each $t=\{i,j,k\}\in T_n$, let $X_t$ denote the largest angle (with possible angle values in the interval $[0,\pi]$) in the triangle $P_iP_jP_k$ with vertices $P_i,P_j,P_k$. Let 
\begin{equation}\label{eq:la}
	\la_d(K):=3\rho_d\frac{\E|P_1P_2|^d}{\V_d(K)}, 
\end{equation}
where 
\begin{equation}\label{eq:rho}
	\rho_d:=\frac{\pi^{d/2}\Ga(d)}{2^{2d-1}\Ga((d+1)/2)\Ga(d+1/2)}, 
\end{equation}
$|P_1P_2|:=\|P_1-P_2\|$ is the Euclidean distance between $P_1$ and $P_2$, and $\V_d$ is the $d$-dimensional volume (that is, the Lebesgue measure on $\R^d$). 
Let 
\begin{equation}\label{eq:phi}
	\vpi_{n,K}:=\max_{t\in T_n}X_t, 
\end{equation}
the largest angle in all the triangles with vertices among the points $P_1,\dots,P_n$. 

\begin{theorem}\label{th:}
The random variable (r.v.) 
\begin{equation}\label{eq:Y}
	Y_{n,d}:=\la_d(K)\,\frac{n^3}{3!}\,(\pi-\vpi_{n,K})^{d-1}
\end{equation}
converges as $n\to\infty$ to an exponential r.v.\ in distribution: 
\begin{equation*}
	\P(Y_{n,d}>a)\to e^{-a}
\end{equation*}
for each real $a>0$, and hence uniformly in real $a>0$. 
\end{theorem}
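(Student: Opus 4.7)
The plan is to recast the statement as a Poisson-approximation problem for a sum of dependent indicators and to verify the limit by the method of factorial moments. Fix $a>0$ and set
\begin{equation*}
\ep_n:=\bigg(\frac{3!\,a}{\la_d(K)\,n^3}\bigg)^{1/(d-1)},\qquad
N_n:=\sum_{t\in T_n}\ii{X_t>\pi-\ep_n}.
\end{equation*}
Then $\{Y_{n,d}>a\}=\{\vpi_{n,K}<\pi-\ep_n\}=\{N_n=0\}$, so it is enough to show that $N_n$ converges in distribution to $\mathrm{Poisson}(a)$; since a $\mathrm{Poisson}(a)$ random variable assigns mass $e^{-a}$ to $0$, this yields $\P(Y_{n,d}>a)\to e^{-a}$.

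The heart of the argument is the single-triple asymptotic $p_\ep:=\P(X_{\{1,2,3\}}>\pi-\ep)=\la_d(K)\,\ep^{d-1}(1+o(1))$ as $\ep\downarrow 0$. I would prove this by conditioning on $P_1,P_2$ and computing the volume of the spindle $S_\ep(P_1,P_2)$ of points $p\in\R^d$ for which the angle at $p$ in the triangle $P_1P_2p$ exceeds $\pi-\ep$. Writing $L:=|P_1P_2|$ and letting $r$ denote the distance from $p$ to the line through $P_1,P_2$, the sine-area identity $\sin(\angle\text{ at }p)=Lr/(|pP_1|\,|pP_2|)$ shows that to leading order $S_\ep(P_1,P_2)$ is rotationally symmetric about the axis $P_1P_2$ and consists of points at transverse distance $r<\ep(L/2+x)(L/2-x)/L$ at axial offset $x\in[-L/2,L/2]$. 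Integrating in cylindrical coordinates, using $\int_{-1}^{1}(1-u^2)^{d-1}\,du=\sqrt{\pi}\,\Ga(d)/\Ga(d+1/2)$, gives $\V_d(S_\ep(P_1,P_2))=\rho_d L^d\ep^{d-1}(1+o(1))$ with $\rho_d$ as in \eqref{eq:rho}. Dividing by $\V_d(K)$, averaging over $(P_1,P_2)$, and multiplying by $3$ (for $\ep<\pi/3$ at most one angle of the triangle can exceed $\pi-\ep$, so the events $\{\text{angle at }P_i>\pi-\ep\}$, $i=1,2,3$, are disjoint and equiprobable) yields exactly the constant $\la_d(K)$ from \eqref{eq:la}. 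Boundary effects -- parts of $S_\ep(P_1,P_2)$ protruding outside $K$ when $P_1$ or $P_2$ is close to $\partial K$ -- are lower-order by dominated convergence, since convexity of $K$ forces the entire spindle into $K$ once $\ep$ is small relative to the distances from $P_1,P_2$ to $\partial K$.

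Next I would show that the factorial moments converge, $\E(N_n)_k\to a^k$ for each $k\ge 1$. Expand $\E(N_n)_k=\sum\P(X_{t_1}>\pi-\ep_n,\dots,X_{t_k}>\pi-\ep_n)$ over ordered $k$-tuples of pairwise distinct triples in $T_n$, and split according to the size of the vertex union $|\bigcup_j t_j|$. When the $t_j$ are pairwise disjoint ($|\bigcup_j t_j|=3k$), the underlying $3k$ points are independent, the joint probability factors as $p_{\ep_n}^k\sim(3!\,a/n^3)^k$, and the number of such tuples is $n(n-1)\cdots(n-3k+1)/(3!)^k\sim n^{3k}/(3!)^k$, giving contribution $\to a^k$. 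For a tuple with $|\bigcup_j t_j|=3k-j$, $j\ge 1$, the count is $O(n^{3k-j})$, while the uniform bound $p_\ep=O(\ep^{d-1})$ (valid since $L\le\mathrm{diam}(K)$) applied to each constituent triangle, conditionally on its shared vertices, gives joint probability $O(\ep_n^{k(d-1)})=O(n^{-3k})$, so the total contribution is $O(n^{-j})=o(1)$. Summing gives $\E(N_n)_k\to a^k$, hence $\E\binom{N_n}{k}\to a^k/k!$ for every $k$.

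By the method of moments for Poisson convergence, $N_n\Rightarrow\mathrm{Poisson}(a)$, whence $\P(Y_{n,d}>a)\to e^{-a}$ for each $a>0$. Uniformity in $a>0$ is automatic: the tails $a\mapsto\P(Y_{n,d}>a)$ are monotone non-increasing and converge pointwise to the continuous monotone limit $a\mapsto e^{-a}$. The principal obstacle is the single-triple asymptotic -- identifying the precise constant $\la_d(K)$ rather than merely establishing $p_\ep=O(\ep^{d-1})$ -- which requires the careful spindle-volume computation and a clean treatment of boundary effects; once this is in hand, the factorial-moment bookkeeping is a routine order-of-magnitude calculation.
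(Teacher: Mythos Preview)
Your route---recast $\{Y_{n,d}>a\}=\{N_n=0\}$ and prove $N_n\Rightarrow\mathrm{Poisson}(a)$ via factorial moments---is a legitimate alternative to the paper's proof. The paper does not compute factorial moments; it invokes a graph-sieve theorem of Galambos (built on R\'enyi's combinatorial sieve), which reduces the conclusion to five checkable conditions: the single-triple asymptotic (your $p_\ep\sim\la_d(K)\ep^{d-1}$, proved as Lemma~\ref{lem:5} via the spindle-volume Lemma~\ref{lem:2} and the boundary control of Lemma~\ref{lem:1}), a pairwise bound $\P(X_t>\pi-\ep,\,X_s>\pi-\ep)=O(\ep^{2(d-1)})$ for overlapping $t,s$ (Lemma~\ref{lem:4}), and structural independence statements for $k$-tuples in which \emph{at most one} pair of triples overlaps. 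The Galambos machinery absorbs all higher-order combinatorics, so the paper never needs a general $k$-tuple joint-probability bound.

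Your factorial-moment step does need such a bound, and the one you assert is too strong. The claim that for any $k$-tuple of distinct triples on $m<3k$ vertices the joint probability is $O(\ep_n^{k(d-1)})$ is false: take $k=4$ and let $t_1,\dots,t_4$ be the four $3$-subsets of $\{1,2,3,4\}$. Conditionally on $P_1,P_2$, once $P_3,P_4$ lie (with well-separated projections) in the $O(\ep)$-spindle about the line $P_1P_2$, the remaining two angle constraints are satisfied for a positive fraction of configurations, so the joint probability is $\Theta(\ep^{2(d-1)})$, not $O(\ep^{4(d-1)})$. The phrase ``applied to each constituent triangle, conditionally on its shared vertices'' cannot produce $k$ independent factors when some triple has all three vertices shared. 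The repair is easy: order the triples so that $t_1\cap t_2\ne\emptyset$ (possible since $m<3k$), set $\nu_i:=|t_i\setminus(t_1\cup\cdots\cup t_{i-1})|$, and integrate out one fresh vertex per triple with $\nu_i\ge1$, using the cylinder bound of Lemma~\ref{lem:3} each time. This yields $\P\big(\bigcap_i\{X_{t_i}>\pi-\ep\}\big)=O(\ep^{\beta(d-1)})$ with $\beta=\#\{i:\nu_i\ge1\}$; since $\nu_1=3$ and $\nu_2\in\{1,2\}$ one gets $m\le 3\beta-1$, hence $\text{count}\times\text{probability}=O(n^{m}\cdot n^{-3\beta})=O(n^{-1})$. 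With this correction your factorial-moment argument goes through; the paper avoids the issue altogether by outsourcing it to Galambos' theorem.
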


This theorem will be proved in Section~\ref{proof}. 

In the particular case when $d=2$ and $K$ is a square in $\R^2$, a version of Theorem~\ref{th:} with an unspecified constant in place of $\la_d(K)$ was presented on the MathOverflow site \cite{alignment}. 

Note that the constant $\la_d(K)$ is, naturally, invariant with respect to all translations, homotheties, and orthogonal transformations applied to the set $K$. 

The conclusion of Theorem~\ref{th:} can be rewritten as follows: 
\begin{equation*}
	\P(\vpi_{n,K}>c)-\Big(1-\exp\Big\{-\la_d(K)(\pi-c)^{d-1}\frac{n^3}{3!}\Big\}\Big)
	\underset{n\to\infty}\longrightarrow0
\end{equation*}
uniformly in $c\in(0,\pi)$. Since $1-\exp\{-\la_d(K)(\pi-c)^{d-1}\tfrac{n^3}{3!}\}$ is increasing in $\la_d(K)$, we see that, the greater the constant $\la_d(K)$, the greater the largest angle $\vpi_{n,K}$ tends to be, in an asymptotic stochastic sense. 

This provides a reason to refer to the constant $\la_d(K)$ as the \emph{elongation (coefficient)} of the convex set $K$. In particular, if the the convex set $K$ is close to a straight line segment, then the volume $\V_d(K)$ will be relatively small and hence the elongation $\la_d(K)$ will be large; accordingly, the largest angle $\vpi_{n,K}$ will then tend to be close to its maximum possible value, $\pi$. 
Another justification for using this term, elongation, is provided by 

\begin{proposition}\label{prop:B,min elong}
Take any natural $d$. The minimum of the elongation $\la_d(K)$ over all bounded convex subsets $K$ of $\R^d$ with nonempty interior is attained only when the closure of $K$ is a closed ball in $\R^d$ of a positive radius.  
\end{proposition}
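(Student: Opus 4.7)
The plan is to rewrite the elongation as a volume-normalized integral and decrease it via Steiner symmetrization. Expanding $\E|P_1P_2|^d$ in \eqref{eq:la} gives
\begin{equation*}
\la_d(K)=\frac{3\rho_d}{\V_d(K)^3}\,I(K),\qquad I(K):=\iint_{K\times K}\|x-y\|^d\,\dd x\,\dd y,
\end{equation*}
and a straightforward scaling check yields $\la_d(rK)=\la_d(K)$, so it suffices to show that $I(K)$ at fixed volume is minimized only by balls.

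Fix a unit vector $u\in\R^d$ and let $\L_u K$ denote the Steiner symmetrization of $K$ about $u^\perp$. Let $P\subset u^\perp$ be the projection of $K$ onto $u^\perp$; for each $x'\in P$, convexity of $K$ makes the fiber $K_{x'}:=\{s\in\R:x'+s\,u\in K\}$ an interval of length $\ell(x')$ and midpoint $m(x')$. Writing $\|x-y\|^2=(s-t)^2+\|x'-y'\|^2$ for $x=x'+s\,u$, $y=y'+t\,u$,
\begin{equation*}
I(K)=\iint_{P\times P}R\bigl(m(x')-m(y');\ell(x'),\ell(y'),\|x'-y'\|\bigr)\,\dd x'\,\dd y',
\end{equation*}
where $R(\De;\ell,\ell',c):=\int_{-\ell/2}^{\ell/2}\!\!\int_{-\ell'/2}^{\ell'/2}\bigl((\De+s-t)^2+c^2\bigr)^{d/2}\,\dd s\,\dd t$. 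For any $\ell,\ell'\ge0$ and $c\ge0$ the integrand is strictly convex and even in $\De$ (its $\De$-second derivative equals $d\bigl((\De+s-t)^2+c^2\bigr)^{d/2-2}\bigl[(d-1)(\De+s-t)^2+c^2\bigr]$, which is positive for $d\ge2$), hence $R(\De;\cdot)\ge R(0;\cdot)$, with strict inequality unless $\De=0$. Since $\L_u K$ has the same fiber lengths but zero midpoints, integration yields $I(\L_u K)\le I(K)$, with equality only if $m(x')=m(y')$ for a.e.\ $(x',y')\in P\times P$; by continuity of $m$ on $\inter P$ (a consequence of convexity of $K$) this forces $m$ to be constant and therefore $K$ to be symmetric across some hyperplane orthogonal to $u$.

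Now suppose $K$ attains the minimum of $\la_d$. Since $\V_d(\L_u K)=\V_d(K)$, the previous step gives $\la_d(\L_u K)\le\la_d(K)$ for every unit $u$, and minimality of $K$ forces equality; the strict version then produces a hyperplane $H_u$ of symmetry of $K$ orthogonal to $u$. The centroid of $K$ is fixed by every isometry of $K$, so it lies on every $H_u$, making all these hyperplanes concurrent at this centroid $c$. Consequently $K$ is invariant under every reflection through $c$, and since such reflections generate the full orthogonal group about $c$, $K$ is a union of spheres centered at $c$, which by convexity and nonempty interior forces $\cl K$ to be a closed ball of positive radius centered at $c$. (Existence of a minimizer — so that the statement is nonvacuous — follows from Gross's theorem that a suitable sequence of Steiner symmetrizations converges to a ball of the same volume in Hausdorff distance, combined with continuity of $\la_d$ in that metric.) The main obstacle is the strict-decrease/rigidity step: the non-strict inequality $I(\L_u K)\le I(K)$ is the easier half (it can be extracted from Riesz rearrangement applied to a large constant minus the integrand), but the strict form and the characterization of equality cases, which are what force the ball, rely on the explicit second-derivative computation above together with the continuity argument for the midpoint function.
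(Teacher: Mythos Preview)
Your proof is correct and follows essentially the same route as the paper: both show that Steiner symmetrization strictly decreases the functional $\iint_{K\times K}\|x-y\|^d\,\dd x\,\dd y$ (at fixed volume) unless $K$ is already symmetric about a hyperplane orthogonal to $u$, then use the resulting reflection symmetries in every direction---together with the fact that reflections generate the full orthogonal group---to force $K$ to be a ball, with the ball shown to achieve the minimum via convergence of iterated symmetrizations. The only cosmetic differences are that the paper first reduces explicitly to closed $K$ and handles the boundary fibers of $Y_u$ with some care, and that it argues the concurrency of the symmetry hyperplanes via a coordinate/central-symmetry step rather than your (cleaner) centroid observation.
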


Proposition~\ref{prop:B,min elong} will be proved in Appendix~\ref{min elong}. 
The proof is based on the Steiner symmetrization; see e.g.\ \cite{klartag04}. In particular, we shall show that the Steiner symmetrization does not increase the elongation. 

In view of the foregoing discussion, the following appears natural: 

\begin{conjecture*}
The largest angle $\vpi_{n,K}$ becomes stochastically smaller after any Steiner symmetrization of $K$. That is, for any natural $d$, any natural $n\ge3$, any convex subset $K$ of $\R^d$, any unit vector $u$ in $\R^d$, and any $c\in(0,\pi)$, one has 
\begin{equation*}
	\P(\vpi_{n,\S_uK}>c)\le\P(\vpi_{n,K}>c), 
\end{equation*}
where $\S_u$ denotes the Steiner symmetrization along the vector $u$. 
\end{conjecture*}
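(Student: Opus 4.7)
The plan is to mimic the strategy of the proof of Proposition~\ref{prop:B,min elong}: condition on the components of the random points perpendicular to $u$, and reduce the conjecture to a one-dimensional rearrangement inequality along each family of chords of $K$ parallel to $u$. Write $P_i=X_i^\perp+S_i u$ with $X_i^\perp\in u^\perp$ and $S_i\in\R$. Since Steiner symmetrization along $u$ preserves the chord-length function $\ell(x^\perp):=\V_1(\{s\in\R:x^\perp+su\in K\})$, the joint distribution of $(X_1^\perp,\dots,X_n^\perp)$ is the same under $K$ and $\S_uK$. Conditional on $(X_1^\perp,\dots,X_n^\perp)$, the $S_i$ are independent, each uniform on the chord $I_i$ of $K$ through $X_i^\perp$ in the original case, and uniform on the centered interval $\tilde I_i:=[-|I_i|/2,\,|I_i|/2]$ in the symmetrized case. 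Writing $m_i$ for the midpoint of $I_i$, so that $I_i=\tilde I_i+m_i$, the conjecture reduces to the one-dimensional inequality
\begin{equation*}
\int_{\tilde I_1\times\cdots\times\tilde I_n} g(s)\,ds\le\int_{\tilde I_1\times\cdots\times\tilde I_n} g(s+m)\,ds
\end{equation*}
for every fixed $(X_1^\perp,\dots,X_n^\perp)$ and every compatible $m=(m_1,\dots,m_n)\in\R^n$, where $g(s):=\ii{\vpi_{n,K}((X_i^\perp+s_i u)_{i=1}^n)>c}$.

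The next step is to exploit two exact symmetries of $g$: translation invariance $g(s+t\mathbf{1})=g(s)$ for every $t\in\R$ (translating all the $P_i$ by $tu$ preserves every angle), and reflection invariance $g(-s)=g(s)$ (reflecting the configuration through $u^\perp$ is an isometry). Let $A:=\{s:g(s)=1\}$, so that $A^c=\bigcap_{t\in T_n}\{X_t\le c\}$. Both $A$ and $A^c$ are symmetric about the origin and translation-invariant along $\R\mathbf{1}$. If one could further establish that $A^c$ is convex, then Anderson's symmetric-rearrangement inequality (applied to the symmetric convex cube $\tilde I_1\times\cdots\times\tilde I_n$ and the symmetric unimodal indicator $\mathbb{I}_{A^c}$) would give $\int_{\tilde I_1\times\cdots\times\tilde I_n}\mathbb{I}_{A^c}(s+m)\,ds\le\int_{\tilde I_1\times\cdots\times\tilde I_n}\mathbb{I}_{A^c}(s)\,ds$, and taking complements would yield the desired inequality on $g$.

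The main obstacle I expect is that $A^c$ is not generally convex. Already for $n=3$, $d=2$, and perpendicular coordinates $(-1,0,1)$, the event $\{X_t>c\}$ with $c$ close to $\pi$ is approximately the ``slab'' $|s_2-(s_1+s_3)/2|<\varepsilon(c)$ around the collinearity plane; the complement of a union of three such slabs (one per vertex) is not convex. So direct Anderson fails and a more delicate argument is required -- for instance, one could first integrate out the translation-invariant direction, using that the resulting projection function $p\mapsto\V_1((\tilde I_1\times\cdots\times\tilde I_n)\cap\pi^{-1}(p))$, with $\pi$ the orthogonal projection onto $\mathbf{1}^\perp$, is symmetric and log-concave by Brunn--Minkowski, and hope that the effective indicator on $\mathbf{1}^\perp$ acquires better structure.

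A second avenue worth trying is \emph{polarization} in place of Steiner symmetrization: polarization across a hyperplane $H$ orthogonal to $u$ is a discrete move swapping the two portions of $K$ on opposite sides of $H$ whenever this concentrates more mass on the prescribed side, and Steiner symmetrization arises as a limit of iterated polarizations. Stochastic monotonicity under polarization admits a natural coupling (reflect across $H$ any point that would land on the ``wrong'' side of $K^{\mathrm{pol}}$), reducing the probabilistic inequality to a pointwise angle comparison between two coupled configurations -- though here too one must contend with the fact that reflecting only some of the $n$ points can increase the maximum angle, so some averaging is still necessary. I would first settle the $n=3$ case by direct computation, both as a sanity check and as a testbed for either approach, before attempting to upgrade to general $n$.
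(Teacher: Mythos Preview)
The statement you are attempting to prove is labeled in the paper as a \emph{Conjecture}, not a proposition or theorem; the paper does not offer any proof of it. It is stated immediately after Proposition~\ref{prop:B,min elong} as a natural strengthening, motivated by the fact (established in the proof of that proposition) that Steiner symmetrization does not increase the elongation $\la_d(K)$, and hence, by Theorem~\ref{th:}, asymptotically (as $n\to\infty$) makes $\vpi_{n,K}$ stochastically smaller. The paper leaves the non-asymptotic, fixed-$n$ version entirely open.

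Your proposal is therefore not a proof but an honest outline of possible attacks, and you yourself identify the central gap: after conditioning on the perpendicular components and reducing to a one-dimensional rearrangement problem in $(s_1,\dots,s_n)$, the complement set $A^c=\{\max_t X_t\le c\}$ is \emph{not} convex, so Anderson's inequality does not apply directly. Your $n=3$, $d=2$ example is exactly right: near $c=\pi$ the set $A$ is a union of three slabs about the collinearity hyperplanes, and the complement of such a union is not convex. Passing to the quotient by $\R\mathbf{1}$ does not rescue this, since the projected set is still a complement of a union of slabs in $\mathbf{1}^\perp$. The polarization route has the analogous obstruction you note: reflecting only a subset of the points across a hyperplane can increase the largest angle, so there is no pointwise coupling, and one is back to needing an averaged inequality of Anderson type that is not available. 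In short, your reductions are sound and mirror the paper's own technique for the asymptotic result, but the key inequality needed to close the argument for fixed $n$ is genuinely missing---which is precisely why the paper states this only as a conjecture.
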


Of course, one may also define the relative elongation of $K$, by dividing the elongation of $K$ by that of the unit ball. Then Proposition~\ref{prop:B,min elong} can be restated as follows: the relative elongation of $K$ is never less than $1$, and it equals $1$ if and only if the closure of $K$ is a closed ball in $\R^d$ of a positive radius. 

The asymptotics (for $d\to\infty$) of the elongation coefficient of balls in $\R^d$ is presented in 

\begin{proposition}\label{lem:la(B)}
Let $B^{(d)}$ denote the unit ball in $\R^d$. Let $P_1$ and $P_2$ be random points drawn independently and uniformly from $B^{(d)}$. Then 
\begin{equation}\label{eq:la(B)}
	\E|P_1P_2|^d\sim 
		\frac1{\sqrt{6}}\,\Big(\frac{8}{3\sqrt3}\Big)^d\quad\text{and}\quad
	\la_d(B^{(d)})\sim\sqrt3\,\Big(\frac2{3\sqrt3}\Big)^d
	\quad\text{as }d\to\infty.  
\end{equation} 
\end{proposition}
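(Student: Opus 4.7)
The plan is to reduce $\E|P_1P_2|^d$ to a three-dimensional Laplace-type integral, apply Laplace's method at a mixed boundary/interior maximum, and then convert the result to $\la_d(B^{(d)})$ via gamma-function asymptotics.

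Using the polar decomposition $P_i=R_iU_i$, where the radii $R_i$ are independent with density $r\mapsto d\,r^{d-1}$ on $[0,1]$ and the directions $U_i$ are independent uniform on the sphere, and letting $\theta$ denote the angle between $U_1$ and $U_2$ (density $\sin^{d-2}\theta/\omega_{d-1}$ with $\omega_{d-1}:=\int_0^\pi\sin^{d-2}\theta\,d\theta$), one has
\begin{equation*}
\E|P_1P_2|^d = \frac{d^2}{\omega_{d-1}}\int_0^1\!\!\int_0^1\!\!\int_0^\pi u^{d-1}v^{d-1}(u^2-2uv\cos\theta+v^2)^{d/2}\sin^{d-2}\theta\,d\theta\,du\,dv.
\end{equation*}
Rewriting the integrand as $(uv\sin^2\theta)^{-1}\exp\{d\,g(u,v,\theta)\}$ with
\[
g(u,v,\theta):=\log u+\log v+\tfrac12\log(u^2-2uv\cos\theta+v^2)+\log\sin\theta,
\]
I then locate the global maximizer. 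The scaling identity $g(tu,tv,\theta)=g(u,v,\theta)+3\log t$ forces the maximum to the face $\max(u,v)=1$; on that face (WLOG $v=1$), an elementary critical-point elimination rules out interior maximizers in $u$, so one reduces to maximizing $f(\theta):=g(1,1,\theta)=\tfrac12\log(2-2\cos\theta)+\log\sin\theta$. The equation $f'(\theta)=\cot(\theta/2)-\tfrac12\tan(\theta/2)=0$ has a unique solution $\theta^*\in(0,\pi)$, given by $\tan^2(\theta^*/2)=2$, so $\cos\theta^*=-1/3$, $\sin\theta^*=2\sqrt 2/3$, and $f(\theta^*)=\log(8/(3\sqrt 3))$, matching the claimed exponential base.

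Next I perform the Laplace asymptotic around $(1,1,\theta^*)$ via the change of variables $u=1-s/d$, $v=1-t/d$, $\theta=\theta^*+\eta/\sqrt d$ with $s,t\ge 0$ and $\eta\in\R$. Direct computation gives $\partial_u g=\partial_v g=3/2$ and $f''(\theta^*)=-3/2$ at the maximizer, so the rescaled exponent tends to $-\tfrac32(s+t)-\tfrac34\eta^2$; cross-derivatives and $(u,v)$-Hessian terms are of order $1/d$ and drop out at leading order. Combining with the smooth prefactor $(uv\sin^2\theta)^{-1}=9/8$ at the maximizer and the Jacobian $d^{-5/2}$, the local contribution evaluates to
\begin{equation*}
\tfrac98\cdot\bigl(\tfrac23\bigr)^{\!2}\cdot 2\sqrt{\pi/3}\cdot\frac{e^{d f(\theta^*)}}{d^{5/2}}
= \frac{\sqrt{\pi/3}}{d^{5/2}}\Bigl(\frac{8}{3\sqrt 3}\Bigr)^{\!d}.
\end{equation*}
Multiplying by $d^2/\omega_{d-1}$ and invoking $\omega_{d-1}\sim\sqrt{2\pi/d}$ yields the first equivalence in \req{eq:la(B)}. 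The key technical point is that the integrand outside any fixed neighborhood of $(1,1,\theta^*)$ is exponentially negligible, which follows from the uniqueness of the global maximizer of $g$.

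For the second equivalence, substituting $K=B^{(d)}$ into \req{eq:la} and using $\V_d(B^{(d)})=\pi^{d/2}/\Ga(d/2+1)$ gives
\begin{equation*}
\la_d(B^{(d)}) = \frac{3\,\Ga(d)\Ga(d/2+1)}{2^{2d-1}\Ga((d+1)/2)\Ga(d+1/2)}\,\E|P_1P_2|^d.
\end{equation*}
Stirling's formula yields $\Ga(d+1/2)/\Ga(d)\sim\sqrt d$ and $\Ga(d/2+1)/\Ga((d+1)/2)\sim\sqrt{d/2}$, so the gamma-function prefactor is $\sim 1/(\sqrt 2\cdot 2^{2d-1})$; inserting the first equivalence from \req{eq:la(B)} and simplifying (using $\sqrt{12}=2\sqrt 3$) delivers $\la_d(B^{(d)})\sim\sqrt 3\,(2/(3\sqrt 3))^d$. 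The main obstacle in the whole plan is the Laplace analysis of the previous paragraph: handling the mixed boundary/interior critical point cleanly, and verifying that the local contribution genuinely dominates globally.
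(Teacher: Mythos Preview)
Your proof is correct and follows essentially the same route as the paper: both start from the polar decomposition, reduce to a Laplace-type integral whose maximizer sits at the radial boundary $r_1=r_2=1$ with interior angular critical point $\cos\theta^*=-1/3$, and then combine the local expansion with Stirling to obtain both equivalences. The only organizational difference is that the paper integrates out the radial variables first (via an explicit cutoff $\de=1/\sqrt{d-1}$) and then applies a one-dimensional Laplace argument in $t=\cos\theta$, whereas you perform the three-variable Laplace expansion in one shot; also note a harmless slip in your write-up---the gamma-function prefactor is $\sim 3/(\sqrt2\cdot 2^{2d-1})$, not $1/(\sqrt2\cdot 2^{2d-1})$, though your final answer is right.
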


This proposition will be proved in Appendix~\ref{elong}. 

In particular, it follows from Proposition~\ref{lem:la(B)} that the $L_d$ norm 
$\| |P_1P_2|\|_d:=(\E|P_1P_2|^d)^{1/d}$ of the distance $|P_1P_2|$ between the random points $P_1$ and $P_2$ in the unit ball converges (as $d\to\infty$) to $\frac{8}{3\sqrt3}=1.539\dots$, which is strictly less than the diameter $2$ of the unit ball. Mainly, this is a consequence of the fact that the maximum of the function $\psi$ in \eqref{eq:Q_d=} occurs at the point $t_*=-\frac13$.  

Let $\limD$ denote the limit in distribution, for $\limD_{n\to\infty}Y_n\overset{\mathrm{D}}=Y$ to mean that $Y_n\underset{n\to\infty}\longrightarrow Y$ in distribution, where $Y$ and the $Y_n$'s are real-valued r.v.'s. 
Next, let $\limsupD$ denote the limit superior in distribution, for $\limsupD_{n\to\infty}Y_n\overset{\mathrm{D}}\le Y$ to mean that $\limsup_{n\to\infty}\P(Y_n\ge y)\le\P(Y\ge y)$ for all real $y$. Similarly, one may let $\liminfD_{n\to\infty}Y_n\overset{\mathrm{D}}\ge Y$ mean that $\limsupD_{n\to\infty}(-Y_n)\overset{\mathrm{D}}\le -Y$ or, equivalently,
$\liminf_{n\to\infty}\P(Y_n>y)\ge\P(Y>y)$ for all real $y$. 
Then $\limD_{n\to\infty}Y_n\overset{\mathrm{D}}=Y$ if and only if $\limsupD_{n\to\infty}Y_n\overset{\mathrm{D}}\le Y$ and $\liminfD_{n\to\infty}Y_n\overset{\mathrm{D}}\ge Y$. 
In the case when the r.v.\ $Y$ is actually non-random, here one may simply write $\le$,\, $\ge$,\, and $=$\, instead of the symbols $\overset{\mathrm{D}}\le$,\, $\overset{\mathrm{D}}\ge$,\, and $\overset{\mathrm{D}}=$\, (respectively).  
In these terms, one can now state the following immediate corollary of Theorem~\ref{th:} and Propositions~\ref{prop:B,min elong} and \ref{lem:la(B)}. 

\begin{corollary} \label{cor:d to infty}
\begin{equation}\label{eq:d to infty}
\limD\limits_{d\to\infty}\limD_{n\to\infty}\big(n^{3/(d-1)}(\pi-\vpi_{n,B^{(d)}})\big)=\frac{3\sqrt3}2  
\ge\limsupD_{d\to\infty}\limD_{n\to\infty}\big(n^{3/(d-1)}(\pi-\vpi_{n,K_d})\big), 	
\end{equation}
%
where, for each natural $d\ge2$, $K_d$ is an arbitrary bounded convex subset of $\R^d$ with nonempty interior. 
\end{corollary}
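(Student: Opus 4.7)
The plan is to invert the scaling in Theorem~\ref{th:} to obtain a distributional limit for $n^{3/(d-1)}(\pi-\vpi_{n,K})$ first, and then to pass to the $d\to\infty$ limit using the asymptotics of Proposition~\ref{lem:la(B)} together with the minimality statement of Proposition~\ref{prop:B,min elong}. Specifically, Theorem~\ref{th:} asserts that $Y_{n,d}:=\la_d(K)\frac{n^3}{3!}(\pi-\vpi_{n,K})^{d-1}$ converges in distribution to an $\mathsf{Exp}(1)$ r.v.\ $E$ as $n\to\infty$. Solving for $\pi-\vpi_{n,K}$ and multiplying by $n^{3/(d-1)}$ gives
\begin{equation*}
n^{3/(d-1)}(\pi-\vpi_{n,K})=\left(\frac{6\,Y_{n,d}}{\la_d(K)}\right)^{1/(d-1)},
\end{equation*}
so by the continuous mapping theorem,
\begin{equation*}
\limD_{n\to\infty}n^{3/(d-1)}(\pi-\vpi_{n,K})\overset{\mathrm{D}}=\la_d(K)^{-1/(d-1)}(6E)^{1/(d-1)}
\end{equation*}
for every fixed $d\ge2$ and every admissible $K$.

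For the equality in \eqref{eq:d to infty}, apply the displayed identity with $K=B^{(d)}$. The cumulative distribution function of the right-hand side is explicit: for $y>0$,
\begin{equation*}
\P\!\left(\la_d(B^{(d)})^{-1/(d-1)}(6E)^{1/(d-1)}\ge y\right)
=\exp\!\left(-\tfrac16\,\la_d(B^{(d)})\,y^{d-1}\right).
\end{equation*}
By Proposition~\ref{lem:la(B)}, $\la_d(B^{(d)})\sim\sqrt3\,(2/(3\sqrt3))^d$, so $\tfrac16\,\la_d(B^{(d)})\,y^{d-1}\to 0$ or $\infty$ according as $y<3\sqrt3/2$ or $y>3\sqrt3/2$. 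Hence the above probability converges to $\mathbf{1}\{y\le 3\sqrt3/2\}$ at every continuity point, giving convergence in distribution to the constant $3\sqrt3/2$, which proves the left equality.

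For the inequality on the right of \eqref{eq:d to infty}, the key point is Proposition~\ref{prop:B,min elong}: $\la_d(K_d)\ge \la_d(B^{(d)})$ for every admissible $K_d$. Since $\la_d(K)^{-1/(d-1)}(6E)^{1/(d-1)}$ is a decreasing function of $\la_d(K)$, we obtain the stochastic domination
\begin{equation*}
\P\!\left(\la_d(K_d)^{-1/(d-1)}(6E)^{1/(d-1)}\ge y\right)
\le\exp\!\left(-\tfrac16\,\la_d(B^{(d)})\,y^{d-1}\right)
\end{equation*}
for every $y>0$, and the right-hand side tends to $0$ for every $y>3\sqrt3/2$ by the previous paragraph. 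This says exactly that the distributional $\limsup_{d\to\infty}$ of $\limD_{n\to\infty}\!\big(n^{3/(d-1)}(\pi-\vpi_{n,K_d})\big)$ is dominated by the constant $3\sqrt3/2$, which is the claim.

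The argument involves no real obstacle: the only slightly delicate point is to verify that the iterated $\limD$ in the corollary is the right formalism to encode the two steps (continuous mapping in $n$, then CDF convergence in $d$), and to confirm that the simple stochastic comparison above suffices to handle the $\limsupD$ for an arbitrary sequence $K_d$, since the dominating quantity depends only on $d$ through $\la_d(B^{(d)})$.
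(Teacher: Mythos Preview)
Your proof is correct and follows exactly the route the paper intends: the paper simply declares the corollary ``immediate'' from Theorem~\ref{th:} and Propositions~\ref{prop:B,min elong} and~\ref{lem:la(B)}, and your argument spells out precisely those three steps (continuous mapping to get the inner $\limD$, the asymptotics $\la_d(B^{(d)})\sim\sqrt3\,(2/(3\sqrt3))^d$ to identify the constant, and the minimality $\la_d(K_d)\ge\la_d(B^{(d)})$ to get the stochastic domination).
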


Note that the double-limit value/bound $\frac{3\sqrt3}2$ in \eqref{eq:d to infty} is non-random; therefore, the symbol $\mathrm{D}$ is not overset there. 
Somewhat crudely but perhaps more transparently, \eqref{eq:d to infty} may be expressed in terms of an approximate equality $\approx$ and an ``approximate inequality'' $\lessapprox$, as follows: for large $d$ and very large $n$, 
\begin{equation*}
	\vpi_{n,B^{(d)}}\approx\pi-\frac{3\sqrt3/2}{n^{3/(d-1)}}\lessapprox\vpi_{n,K_d},  
\end{equation*}
with non-random approximate value/bound $\pi-\frac{3\sqrt3/2}{n^{3/(d-1)}}$. 

%

\section{Proof of Theorem~\ref{th:}} \label{proof}
Without loss of generality (wlog), (i) the diameter of the set $K$ (defined as \break 
$\sup\{\|x-y\|\colon x,y \text{ in } K\}$) equals $1$ and (ii) the point $0\in\R^d$ is in the interior of $K$, so that $B_0(r)\subseteq K$ for some real $r>0$; here, as usual,  $B_x(r):=\{y\in\R^d\colon\|x-y\|<r\}$ for any $x\in\R^d$. 

\begin{lemma}\label{lem:1}
Take any $\de\in(0,1)$ and any $x\in K_\de$, where 
\begin{equation*}
K_\de:=(1-\de)K:=\{(1-\de)z\colon z\in K\}. 	
\end{equation*}
Then $B_x(r\de)\subseteq K$. 
\end{lemma}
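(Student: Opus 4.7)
The plan is to exploit convexity of $K$ together with the assumption $B_0(r) \subseteq K$. Concretely, any point $y \in B_x(r\de)$ can be written as $y = x + w$ with $\|w\| < r\de$. Since $x \in K_\de = (1-\de)K$, there exists $z \in K$ with $x = (1-\de)z$, and hence
\[
y = (1-\de)z + \de \cdot (w/\de).
\]
This is a convex combination of the two points $z$ and $w/\de$.

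The point $z$ is in $K$ by choice. For the point $w/\de$, one computes $\|w/\de\| = \|w\|/\de < r$, so $w/\de \in B_0(r) \subseteq K$ by hypothesis (ii) on $K$. Convexity of $K$ then gives $y \in K$, and since $y$ was arbitrary in $B_x(r\de)$, the inclusion $B_x(r\de) \subseteq K$ follows.

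There is no substantial obstacle here: the lemma is a one-line consequence of convexity once the convex combination $y = (1-\de)z + \de(w/\de)$ is spotted. The only thing to be mindful of is the open-ball convention $B_x(\rho) = \{y: \|y-x\| < \rho\}$, which makes the strict inequality $\|w/\de\| < r$ exactly what is needed to place $w/\de$ in $B_0(r)$ and hence in $K$.
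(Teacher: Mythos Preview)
Your proof is correct and essentially identical to the paper's: both write $y=(1-\de)\,\frac{x}{1-\de}+\de\,\frac{y-x}{\de}$ as a convex combination of a point of $K$ and a point of $B_0(r)\subseteq K$. The only difference is notational (you name $z=x/(1-\de)$ and $w=y-x$ explicitly).
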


\begin{proof}[Proof of Lemma~\ref{lem:1}]
Take any $y\in B_x(r\de)$. Then 
\begin{equation*}
	y=x+(y-x)=(1-\de)\frac x{1-\de}+\de\,\frac{y-x}\de\in K, 
\end{equation*}
because $\frac x{1-\de}\in K$, $\frac{y-x}\de\in B_0(r)\subseteq K$, and the set $K$ is convex. 
Thus, indeed $B_x(r\de)\subseteq K$. 
\end{proof}

Take any 
\begin{equation}\label{eq:ep}
	\ep\in(0,\pi/2). 
\end{equation}
Take any distinct points $x$ and $y$ in $\R^d$, and let 
\begin{equation}\label{eq:ell}
	\ell:=\|x-y\|. 
\end{equation}
Let $S_{\ep;x,y}$ denote the set of all points $z\in\R^d$ such that the angle $A_{xzy}$ at the vertex $z$ in the triangle $xyz$ is $>\pi-\ep$. In view of the condition $\ep\in(0,\pi/2)$, one may note that for any $z\in S_{\ep;x,y}$ the angle $A_{xzy}$ coincides with the largest angle (denoted here by $A_{\{x,y,z\}}$) in the triangle $xyz$. 
Let 
\begin{equation*}
	V_{\ep;d;x,y}:=\V_d(S_{\ep;x,y}). 
\end{equation*}
 
\begin{lemma}\label{lem:2}
One has 
\begin{equation*}
	V_{\ep;d;x,y}\sim\rho_d\,\ell^d\ep^{d-1}, 
\end{equation*}
where $\rho_d$ and $\ell$ are as in \eqref{eq:rho} and \eqref{eq:ell}. 
\end{lemma}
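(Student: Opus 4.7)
\textbf{Proof plan for Lemma \ref{lem:2}.}

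The plan is to exploit the rotational symmetry of $S_{\ep;x,y}$ about the line through $x$ and $y$, reduce the volume to a one-dimensional integral in cylindrical coordinates, and then extract the leading asymptotic as $\ep\downarrow 0$.

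First I would choose coordinates so that $x=(-\ell/2,0)$ and $y=(\ell/2,0)$, where the first coordinate is along the $xy$-axis and the remaining $(d-1)$ coordinates form the vector $w\in\R^{d-1}$ orthogonal to it. Writing a generic point as $z=(t,w)$ and letting $s:=\|w\|$, the angle $A_{xzy}$ depends only on $(t,s)$ and not on the direction of $w$, so $S_{\ep;x,y}$ is a solid of revolution about the $t$-axis. Therefore
\begin{equation*}
V_{\ep;d;x,y}=v_{d-1}\int_{-\ell/2}^{\ell/2}s_{\max}(t)^{d-1}\,dt,\qquad v_{d-1}:=\frac{\pi^{(d-1)/2}}{\Gamma((d+1)/2)},
\end{equation*}
where $v_{d-1}$ is the volume of the unit ball in $\R^{d-1}$ and $s_{\max}(t)$ is the largest $s\ge 0$ with $(t,s)\in S_{\ep;x,y}$.

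Next I would identify $s_{\max}(t)$ explicitly. In the half-plane $s>0$, the locus $\{A_{xzy}=\pi-\ep\}$ is, by the inscribed-angle theorem, an arc of a circle through $x$ and $y$ of radius $R=\ell/(2\sin\ep)$ whose center lies on the $t$-axis extended to the opposite half-plane at the point $(0,-R\cos\ep)$. Solving the circle equation yields
\begin{equation*}
s_{\max}(t)=\sqrt{R^2-t^2}-R\cos\ep,\qquad |t|\le\ell/2,
\end{equation*}
with the range of $t$ cut off exactly at $\pm\ell/2$ (where the arc meets $x$ and $y$). Note that $\ep\in(0,\pi/2)$, as assumed in \eqref{eq:ep}, guarantees that this angle is the largest in the triangle and rules out the ``far'' arc.

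For the asymptotics, I would rescale via $t=\ell\tau/2$ and expand for $\ep\downarrow 0$. Using $R\sim\ell/(2\ep)$, a short Taylor expansion gives
\begin{equation*}
s_{\max}(\ell\tau/2)=\frac{\ell\ep}{4}\bigl(1-\tau^2\bigr)\bigl(1+O(\ep^2)\bigr)
\end{equation*}
uniformly in $\tau\in[-1,1]$, so the error factors out of the integral. Substituting into the volume formula and using the standard beta-function evaluation
\begin{equation*}
\int_{-1}^{1}(1-\tau^2)^{d-1}\,d\tau=\frac{\sqrt{\pi}\,\Gamma(d)}{\Gamma(d+1/2)}
\end{equation*}
gives, together with the closed form of $v_{d-1}$, the asymptotic
\begin{equation*}
V_{\ep;d;x,y}\sim \frac{\pi^{d/2}\Gamma(d)}{2^{2d-1}\Gamma((d+1)/2)\Gamma(d+1/2)}\,\ell^d\ep^{d-1}=\rho_d\,\ell^d\ep^{d-1},
\end{equation*}
matching \eqref{eq:rho}.

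The only real obstacle is the uniformity of the expansion of $s_{\max}(t)$ near the endpoints $t=\pm\ell/2$ (where $s_{\max}$ vanishes), but since both the leading approximation $\tfrac{\ell\ep}{4}(1-\tau^2)$ and $s_{\max}$ itself vanish to the same order there, the relative error remains $O(\ep^2)$ uniformly on $[-1,1]$; no delicate cutoff is needed. Everything else is elementary calculus and a standard beta-integral evaluation.
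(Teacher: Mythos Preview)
Your proposal is correct and follows essentially the same route as the paper: both exploit the rotational symmetry about the line $xy$, invoke the inscribed-angle theorem to identify the boundary of $S_{\ep;x,y}$ as a circular arc of radius $\ell/(2\sin\ep)$, and reduce the volume to an integral of $(1-\tau^2)^{d-1}$ via the expansion $s_{\max}\sim\tfrac{\ell\ep}{4}(1-\tau^2)$; the paper merely normalizes $\ell=2$ first and integrates over $\eta\in[0,1]$ rather than $\tau\in[-1,1]$, but your function $s_{\max}(\ell\tau/2)=\tfrac{\ell}{2}\bigl(\sqrt{\csc^2\ep-\tau^2}-\cot\ep\bigr)$ is exactly the paper's $g(\eta)$ when $\ell=2$.
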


Here and in what follows, it is assumed by default that the limit relations are as $\ep\downarrow0$, unless specified otherwise.  
For any positive expressions $\EE_1$ and $\EE_2$, we write $\EE_1\sim\EE_2$ if $\EE_1/\EE_2\to1$, $\EE_1\O\EE_2$ if $\limsup(\EE_1/\EE_2)<\infty$, and $\EE_1\ll\EE_2$ if $\EE_1/\EE_2\to0$. 

\begin{proof}[Proof of Lemma~\ref{lem:2}]
By homothety, wlog $\ell=2$. 
Note that the angle $A_{xzy}$ equals $\pi-\ep$ if and only if the arc of the circle through the points $x,z,y$ (in this order) is such that the smaller one of the two adjacent angles between the line $xy$ (through the points $x,y$) and the line tangent to the mentioned arc at the point $x$ equals $\ep$ (of course, the roles of $x$ and $y$ are interchangeable here). It follows that $V_{\ep;d;x,y}$ equals twice the volume of the body of revolution obtained by the rotation in $\R^d$ of the curvilinear triangle $AEBCDA$ about the line $AC$, where the coordinates of the points $
A,B,C,D,E$ in some orthonormal basis of some two-dimensional subspace of $\R^d$ are as follows: 
\begin{gather*}
	A=(\cot\ep,1), 
	\quad B=(\csc\ep,0), 
	\quad C=(\cot\ep,0), 
	\\
	\quad D=(\cot\ep,\eta), 
	\quad E
	=\big(\sqrt{\csc^2\ep-\eta^2},\eta\big),
\end{gather*}
$0<\eta<1$, the curve $AEB$ is the arc of the circle centered at $O=(0,0)$, and the curvilinear triangle $AEBCDA$ is the convex hull of the union of the arc $AEB$ and the 
singleton set $\{C\}$. This is illustrated in the picture here. 



%
\begin{center}
\begin{tikzpicture}[scale=8]
\def\Ax{0.688191}
\def\A{(0.688191,0.5)}
\def\B{(0.850651,0)}
\def\C{(\Ax,0)}
\def\D{(\Ax, 0.1844)}
\def\E{(0.830424, 0.1844)}
\def\t1{30}
\colorlet{anglecolor}
{gray}
\colorlet{fillcolor}
{gray!30}
\filldraw[very thick,fill=gray!15,draw=black] \C -- (0.850651,0mm) arc(0:36:0.850651);
\filldraw[fill=fillcolor,draw=anglecolor] (0,0) -- (1.5mm,0pt) arc(0:36:1.5mm);
\draw (18:1mm) node
{$\ep$};
\filldraw[fill=
fillcolor,draw=
anglecolor] \A -- (0.688191,0.35) arc(270:306:0.15);
\draw (0.719093, 0.404894) node
{$\ep$};
\draw
(0,0) -- \B;
\draw[very thick](0.850651,0mm) arc(0:36:0.850651);
\draw[
very thick] 
\A -- (\Ax, 0);
\draw[very thick] \B -- \C;
\draw[thin] \D -- \E;
\draw (0,0) -- \A;
\draw[thin] \A -- 
(0.846893, 0.281565);
\draw[below,left] (0,0) node {$O$};
\draw[above] \A node {$A$};
\draw[below] \B node {$B$};
\draw[below] \C node {$C$};
\draw[left] (\Ax+.005, 0.1844) node {$D$};
\draw[left] (\Ax-.015, 0.085) node {$\eta$};
\draw[right] (0.830424-.005, 0.1844) node {$E$};
\draw[decorate,decoration=
{brace,amplitude=5pt}] \C -- \D;
\draw[decoration={brace}] \C -- \D;
\draw[->] (0.670691, 0.296401) arc (90+30:450-30:.035 and .02);
\end{tikzpicture}
\end{center}

Note that $|CD|=\eta$ and 
\begin{align*}
	|DE|=g(\eta):=&\sqrt{\csc^2\ep-\eta^2}-\cot\ep 
	=\frac{1-\eta^2}{\sqrt{\csc^2\ep-\eta^2}+\cot\ep} 
		\sim\tfrac12\,(1-\eta^2)\ep. 
\end{align*}
So, letting $\beta_{d-1}$ denote the volume of the unit ball in $\R^{d-1}$, we have 
\begin{multline*}
	V_{\ep;d;x,y}=2\int_0^1\beta_{d-1}\,g(\eta)^{d-1}\dd\eta
	\sim2\frac{\beta_{d-1}\ep^{d-1}}{2^{d-1}}\int_0^1(1-\eta^2)^{d-1}\dd\eta
	=\rho_d\,2^d\ep^{d-1}, 
\end{multline*}
because 
\begin{equation}\label{eq:be,...}
	\beta_{d-1}=\frac{\pi^{(d-1)/2}}{\Ga((d+1)/2)}\quad\text{and}\quad 
	\int_0^1(1-\eta^2)^{d-1}\dd\eta=\frac{\sqrt\pi\,\Ga(d)}{2\Ga(d+1/2)}.   
\end{equation}
Thus, Lemma~\ref{lem:2} is proved.  
\end{proof}

\begin{lemma}\label{lem:3}
For the conditional probability that $X_{\{1,2,3\}}>\pi-\ep$ given $P_1$ and $P_2$, one has  
\begin{equation}\label{eq:cond,X}
\P(X_{\{1,2,3\}}>\pi-\ep|P_1,P_2)\O\ep^{d-1}.  
\end{equation} 
\end{lemma}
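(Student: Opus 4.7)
The plan is a direct volume estimate. I will condition on $(P_1,P_2)=(x,y)$ with $\ell:=\|x-y\|\le 1$ (the wlog-normalised diameter of $K$), and decompose the set of $z$'s for which the triangle $xyz$ has its largest angle exceeding $\pi-\ep$ into three pieces, one for each vertex that might carry the large angle. Since $\ep<\pi/2$ by~\eqref{eq:ep}, at most one angle of any triangle can exceed $\pi-\ep$, so this decomposition is disjoint, and it will suffice to bound the volume of each piece separately and then sum.

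The piece in which the angle at $z$ is the large one is exactly the set $S_{\ep;x,y}$ from Lemma~\ref{lem:2}, which I will invoke directly: its volume is $\sim\rho_d\ell^d\ep^{d-1}=O(\ep^{d-1})$, uniformly in $(x,y)$ since $\ell\le 1$.

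The piece in which the angle at $x$ is the large one is the intersection of $K$ with the open circular cone $C_x$ of apex $x$, axis along $x-y$, and half-opening angle $\ep$. I will bound $\V_d(C_x\cap K)$ by integrating the $(d-1)$-dimensional cross-sections perpendicular to the cone axis: the cross-section at axial distance $r$ from $x$ is a $(d-1)$-ball of radius $r\tan\ep$, and $r$ ranges only over $[0,1]$ inside $K$ because $K$ has diameter $1$. This will give $\V_d(C_x\cap K)\le\beta_{d-1}(\tan\ep)^{d-1}/d=O(\ep^{d-1})$, uniformly in $(x,y)$; the piece at $y$ is symmetric.

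Summing the three $O(\ep^{d-1})$ bounds and dividing by the positive constant $\V_d(K)$ will yield~\eqref{eq:cond,X}. The argument has no substantive obstacle; the one point needing mild attention is uniformity in $(x,y)$ of the bound coming from Lemma~\ref{lem:2}, but this is immediate from the proof of that lemma, since its implicit constant depends only on $d$ and the $\ell$-dependence factors cleanly as $\ell^d$ with $\ell$ bounded above.
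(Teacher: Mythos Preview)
Your argument is correct, but it differs from the paper's. The paper does not split into three cases according to which vertex carries the large angle; instead it makes a single observation that covers all three at once: if the largest angle in the triangle $P_1P_2P_3$ exceeds $\pi-\ep$, then each of the two remaining angles is less than $\ep$, and hence the distance from $P_3$ to the line through $P_1$ and $P_2$ is at most $(|P_1P_3|\wedge|P_2P_3|)\sin\ep\le\ep$ (using the diameter normalisation). Thus $P_3$ lies in the intersection $I$ of $K$ with the solid cylinder of radius $\ep$ about the line $P_1P_2$, and since every line parallel to $P_1P_2$ meets $K$ in a segment of length at most $1$, one has $\V_d(I)\O\ep^{d-1}$. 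This is shorter and avoids invoking Lemma~\ref{lem:2}; your route, on the other hand, makes the three contributions explicit (which is conceptually closer to the later proof of Lemma~\ref{lem:5}, where the vertex-$z$ piece $S_{\ep;x,y}$ is the dominant one) and your cone estimate for the angle-at-$x$ and angle-at-$y$ pieces is a clean alternative. Your remark about uniformity in $(x,y)$ for the $S_{\ep;x,y}$ piece is exactly right: by the homothety in the proof of Lemma~\ref{lem:2}, $V_{\ep;d;x,y}/\ell^d$ is independent of $x,y$, so the $O(\ep^{d-1})$ bound is automatically uniform once $\ell\le1$.
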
 

\begin{proof}[Proof of Lemma~\ref{lem:3}]
The event 
$X_{\{1,2,3\}}>\pi-\ep$ implies that 
\begin{equation}\label{eq:dist}
	\text{the distance from $P_3$ to the line $P_1P_2$ is $\le(|P_1P_3|\wedge|P_2P_3|)\sin\ep\le\ep$,}
\end{equation}
since the diameter of the set $K$ was assumed to equal $1$; so, then the point $P_3$ lies in the intersection (say $I$) of $K$ with (
the convex hull of)
the round cylinder of radius $\ep$ (with axis $P_1P_2$). The intersection of $I$ with any line parallel to $P_1P_2$ is a segment of length not exceeding the diameter of $K$, which is $1$. So, $\V_d(I)\O\ep^{d-1}$, and hence 
\eqref{eq:cond,X} follows. 
\end{proof}

\begin{lemma}\label{lem:4}
For any $t$ and $s$ in $T_n$ such that $t\ne s$ and $t\cap s\ne\emptyset$, one has  
\begin{equation*}
\P(X_t>\pi-\ep,X_s>\pi-\ep)\O\ep^{2(d-1)}, 
\end{equation*}
uniformly over all such $s$ and $t$.  
\end{lemma}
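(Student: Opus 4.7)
The plan is to exploit the conditional independence structure of the family $(P_i)$, splitting into two cases according to the size of the overlap $t\cap s$, which must be either $1$ or $2$ since $t\ne s$ but $t\cap s\ne\emptyset$.

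\textbf{Case 1: $|t\cap s|=2$.} By symmetry, I may take $t=\{1,2,3\}$ and $s=\{1,2,4\}$. I condition on $(P_1,P_2)$. Since $P_3$ and $P_4$ are independent of each other and of $(P_1,P_2)$, the events $\{X_t>\pi-\ep\}$ and $\{X_s>\pi-\ep\}$ depend, respectively, only on $P_3$ and only on $P_4$, hence are conditionally independent given $(P_1,P_2)$. Lemma~\ref{lem:3} applied to each factor yields the pointwise bound
\begin{equation*}
\P(X_t>\pi-\ep,\,X_s>\pi-\ep\mid P_1,P_2)\O\ep^{2(d-1)},
\end{equation*}
uniformly in $(P_1,P_2)$. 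Taking expectations preserves this bound.

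\textbf{Case 2: $|t\cap s|=1$.} Wlog $t=\{1,2,3\}$ and $s=\{1,4,5\}$. I condition on $P_1$ alone. Given $P_1$, the event $\{X_t>\pi-\ep\}$ depends only on $(P_2,P_3)$ and $\{X_s>\pi-\ep\}$ only on $(P_4,P_5)$, and these two pairs are independent (both of each other and of $P_1$). So the two events are conditionally independent given $P_1$. For each individual factor, the tower property together with Lemma~\ref{lem:3} gives
\begin{equation*}
\P(X_t>\pi-\ep\mid P_1)=\E\bigl[\P(X_t>\pi-\ep\mid P_1,P_2)\,\big|\,P_1\bigr]\O\ep^{d-1},
\end{equation*}
and the analogous bound holds for $s$. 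Multiplying and taking expectations gives the claim.

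The uniformity over $s$ and $t$ is automatic, since the bound in Lemma~\ref{lem:3} is uniform in the conditioning variables, and the reduction to the two canonical cases above uses only the size of $t\cap s$. There is no real obstacle here: the argument is essentially bookkeeping of conditional independence, once one realizes that the shared indices in $t\cap s$ are exactly the right variables to condition on so that the two ``large-angle'' events become independent and each can be controlled by Lemma~\ref{lem:3}.
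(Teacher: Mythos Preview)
Your proof is correct and takes essentially the same approach as the paper: the same two-case split according to $|t\cap s|\in\{1,2\}$, the same choice of conditioning variables (the shared indices), and the same use of Lemma~\ref{lem:3} twice. The only cosmetic difference is that you phrase the argument via conditional independence and products of conditional probabilities, whereas the paper writes it as iterated expectations of an indicator times a conditional probability; these are equivalent formulations of the same computation.
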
 

\begin{proof}[Proof of Lemma~\ref{lem:4}] Take any $t$ and $s$ in $T_n$ such that $t\ne s$ and $t\cap s\ne\emptyset$. Then the cardinality of $t\cap s$ is either $2$ or $1$. 
So, wlog one of the following two cases holds: (i) $t=\{1,2,3\}$ and $s=\{1,2,4\}$ or (ii) $t=\{1,2,3\}$ and $s=\{1,4,5\}$. 
In case (i), 
\begin{align*}
	\P(X_t>\pi-\ep,X_s>\pi-\ep)&=\E\ii{X_{\{1,2,3\}}>\pi-\ep}\P(X_{\{1,2,4\}}>\pi-\ep|P_1,P_2,P_3) \\ 
	&=\E\ii{X_{\{1,2,3\}}>\pi-\ep}\P(X_{\{1,2,4\}}>\pi-\ep|P_1,P_2) \\ 
	&\O\E\ii{X_{\{1,2,3\}}>\pi-\ep}\ep^{d-1} \\ 
	&=\E\P(X_{\{1,2,3\}}>\pi-\ep|P_1,P_2)\ep^{d-1}\O\ep^{2(d-1)},  
\end{align*}
by Lemma~\ref{lem:3}; here and elsewhere, $\ii{\cdot}$ is the indicator function. 
Similarly, in case~(ii), 
\begin{align*}
	\P(X_t>\pi-\ep,X_s>\pi-\ep)&=\E\ii{X_{\{1,2,3\}}>\pi-\ep}\P(X_{\{1,4,5\}}>\pi-\ep|P_1,P_2,P_3) \\ 
	&=\E\ii{X_{\{1,2,3\}}>\pi-\ep}\P(X_{\{1,4,5\}}>\pi-\ep|P_1) \\ 
	&=\E\ii{X_{\{1,2,3\}}>\pi-\ep}\,\E\big(\P(X_{\{1,4,5\}}>\pi-\ep|P_1,P_4)|P_1\big) \\ 
	&\O\E\ii{X_{\{1,2,3\}}>\pi-\ep}\ep^{d-1} \O\ep^{2(d-1)}.   
\end{align*}
Lemma~\ref{lem:4} is now proved. 
\end{proof}

\begin{lemma}\label{lem:5}
One has  
\begin{equation}\label{eq:P(X>pi-ep)}
\P(X_{\{1,2,3\}}>\pi-\ep)\sim\la_d(K)\ep^{d-1}.  
\end{equation}   
\end{lemma}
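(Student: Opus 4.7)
The plan is to condition on $(P_1, P_2)$, reduce to a one-angle event, invoke Lemma~\ref{lem:2} pointwise, pass to the expectation by bounded convergence, and separately discard a vanishing boundary contribution. Since $\ep \in (0, \pi/2)$ and the three angles of any triangle sum to $\pi$, at most one angle of the triangle $P_1 P_2 P_3$ can exceed $\pi - \ep$. Hence the three events ``the angle at $P_i$ exceeds $\pi - \ep$'' are pairwise disjoint, their union equals $\{X_{\{1,2,3\}} > \pi - \ep\}$, and exchangeability of $P_1, P_2, P_3$ yields
$$\P(X_{\{1,2,3\}} > \pi - \ep) = 3\, \P(A_{P_1 P_3 P_2} > \pi - \ep) = \frac{3}{\V_d(K)}\, \E\bigl[\V_d(S_{\ep; P_1, P_2} \cap K)\bigr],$$
the second equality following from the uniform distribution of $P_3$ on $K$. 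In view of \eqref{eq:la}, the target \eqref{eq:P(X>pi-ep)} reduces to showing that $\E[\V_d(S_{\ep; P_1, P_2} \cap K)] \sim \rho_d\, \E|P_1 P_2|^d\, \ep^{d-1}$.

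Decompose $\V_d(S_{\ep; P_1, P_2} \cap K) = V_{\ep;d;P_1,P_2} - \V_d(S_{\ep; P_1, P_2} \setminus K)$. From the explicit picture in the proof of Lemma~\ref{lem:2}, $S_{\ep;x,y}$ sits inside a tube around the segment $xy$ of length $|xy|$ and perpendicular radius at most $(|xy|/2)\tan(\ep/2)$; since $|xy| \le 1$, this produces a uniform bound $V_{\ep;d;x,y}/\ep^{d-1} \le C_d$ for all $x, y \in K$ and all sufficiently small $\ep$. Combined with the pointwise convergence $V_{\ep;d;P_1,P_2}/\ep^{d-1} \to \rho_d |P_1 P_2|^d$ from Lemma~\ref{lem:2}, bounded convergence gives $\E[V_{\ep;d;P_1,P_2}]/\ep^{d-1} \to \rho_d\, \E|P_1 P_2|^d$, matching the desired main term.

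It remains to show that the boundary defect $\E[\V_d(S_{\ep; P_1, P_2} \setminus K)]$ is $o(\ep^{d-1})$. Fix $\de \in (0, 1)$ and set $E_\de := \{P_1, P_2 \in K_\de\}$. On $E_\de$, Lemma~\ref{lem:1} and convexity imply that the $r\de$-thickening of the segment $P_1 P_2$ lies in $K$; by the tube description above, $S_{\ep; P_1, P_2} \subset K$ on $E_\de$ as soon as $\ep$ is small enough that $\tan(\ep/2) < r\de$. Combined with the uniform bound,
$$\E[\V_d(S_{\ep; P_1, P_2} \setminus K)] \le C_d\, \ep^{d-1}\, \P(E_\de^c) \le C_d\, \ep^{d-1}\, \bigl(1 - (1-\de)^{2d}\bigr),$$
and letting first $\ep \to 0$, then $\de \to 0$, yields the desired $o(\ep^{d-1})$ estimate. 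The main obstacle is precisely this boundary effect: $S_{\ep; P_1, P_2}$ is defined without reference to $K$ and can protrude past $\partial K$ when $P_1$ or $P_2$ sits within $O(\ep)$ of the boundary. The quantitative interior description $K_\de$ from Lemma~\ref{lem:1}, coupled with the tube containment inherent in the geometry of Lemma~\ref{lem:2}, is what allows this protrusion to be trapped into a product $\ep^{d-1}\cdot o_\de(1)$ that vanishes after the double limit.
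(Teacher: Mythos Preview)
Your proof is correct and follows essentially the same route as the paper: reduce to one angle by symmetry, use Lemma~\ref{lem:2} for the main term, and control the boundary effect via the interior sets $K_\de$ from Lemma~\ref{lem:1}. The only organizational differences are that the paper couples $\de=\ep/r$ and uses the exact scaling identity $\E\bigl[|P_1P_2|^d\,\ii{\{P_1,P_2\}\subseteq K_\de}\bigr]=(1-\de)^{3d}\E|P_1P_2|^d$ in place of your bounded-convergence step, and it bounds the boundary contribution via Lemma~\ref{lem:3} rather than your direct tube-volume estimate; both packagings lead to the same conclusion.
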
 

\begin{proof}[Proof of Lemma~\ref{lem:5}] 
The event $\{X_{\{1,2,3\}}>\pi-\ep\}$ is the union of the events $\{A_{P_1P_3P_2}>\pi-\ep\}$, $\{A_{P_1P_2P_3}>\pi-\ep\}$, and $\{A_{P_2P_1P_3}>\pi-\ep\}$, and these events are pairwise disjoint, in view of \eqref{eq:ep}. So, 
\begin{equation}\label{eq:=3 times}
	\P(X_{\{1,2,3\}}>\pi-\ep)=3\P(A_{P_1P_3P_2}>\pi-\ep). 
\end{equation}
By Lemma~\ref{lem:1} with 
\begin{equation*}
	\de:=\ep/r, 
\end{equation*}
the event $\big\{\{P_1,P_2\}\subseteq K_\de\big\}$ implies $B_{P_1}(\ep)\cup B_{P_2}(\ep)\subseteq K$. 
On the other hand, in view of \eqref{eq:ep}, the event $\big\{A_{P_1P_3P_2}>\pi-\ep\big\}$ implies that the point on the line $P_1P_2$ that is the closest one to the $P_3$ is between $P_1$ and $P_2$. 
Hence, in view \eqref{eq:dist}, the event $\big\{A_{P_1P_3P_2}>\pi-\ep,\{P_1,P_2\}\subseteq K_\de\big\}$ implies that $P_3$ is in the convex hull of the set $B_{P_1}(\ep)\cup B_{P_2}(\ep)$ and thus in $K$. 
So, on the event $\big\{\{P_1,P_2\}\subseteq K_\de\big\}$, 
\begin{equation*}
	\P(A_{P_1P_3P_2}>\pi-\ep|P_1,P_2)=\frac{V_{\ep;d;P_1,P_2}}{\V_d(K)}
	\sim\rho_d\,\frac{|P_1P_2|^d}{\V_d(K)}\ep^{d-1}  
\end{equation*}
by Lemma~\ref{lem:2}, 
whence 
\begin{multline}\label{eq:P1,P2 in}
	\P(A_{P_1P_3P_2}>\pi-\ep,\{P_1,P_2\}\subseteq K_\de)
	\sim\rho_d\,\frac{\ep^{d-1}}{\V_d(K)}\,\E|P_1P_2|^d\,\ii{\{P_1,P_2\}\subseteq K_\de} \\ 
	=\rho_d\,\frac{\ep^{d-1}}{\V_d(K)}\,(1-\de)^{3d}\E|P_1P_2|^d
	\sim\rho_d\,\frac{\ep^{d-1}}{\V_d(K)}\,\E|P_1P_2|^d;  
\end{multline}
here are details concerning the scaling factor $(1-\de)^{3d}$ in the above display: for any $t\in(0,1)$, 
\begin{align*}
 \E|P_1P_2|^d\,\ii{\{P_1,P_2\}\subseteq tK}
 =&\int_{tK}\int_{tK}\|x_1-x_2\|^d\frac{\V_d(\dd x_1)}{\V_d(K)}\,\frac{\V_d(\dd x_2)}{\V_d(K)} \\ 
 =&\int_{K}\int_{K}\|ty_1-ty_2\|^d\frac{\V_d(t\dd y_1)}{\V_d(K)}\,\frac{\V_d(t\dd y_2)}{\V_d(K)} \\ 
 =&t^{3d}\int_{K}\int_{K}\|y_1-y_2\|^d\frac{\V_d(\dd y_1)}{\V_d(K)}\,\frac{\V_d(\dd y_2)}{\V_d(K)}
 =t^{3d}\E|P_1P_2|^d. 
\end{align*}

Note next that $\P(P_1\notin K_\de)=1-(1-\de)^d\ll1$. On the other hand, by Lemma~\ref{lem:3}, $\P(A_{P_1P_3P_2}>\pi-\ep|P_1)\O\ep^{d-1}$. So, 
\begin{equation}\label{eq:P1,P2 out}
\begin{aligned}
	&\P(A_{P_1P_3P_2}>\pi-\ep,P_2\notin K_\de) \\ 
	=&\P(A_{P_1P_3P_2}>\pi-\ep,P_1\notin K_\de)
	\O\ep^{d-1}\P(P_1\notin K_\de)\ll\ep^{d-1}. 
\end{aligned}	
\end{equation}
Now Lemma~\ref{lem:5} immediately follows by \eqref{eq:=3 times}, \eqref{eq:P1,P2 in}, \eqref{eq:P1,P2 out}, and \eqref{eq:la}. 
\end{proof}

To complete the proof of Theorem~\ref{th:}, we shall use the key 
result of Galambos~\cite{galambos72}, which is in turn based on a combinatorial graph sieve theorem due to R\'enyi \cite{renyi61}. For readers' convenience, let us restate here the result of \cite{galambos72}, as follows. 

For each natural $n$, let $T_n$ be a set of cardinality $
|T_n|\to\infty$ and let $E_n$ be a set of subsets of $T_n$ of cardinality $2$ such that 
\begin{equation}\label{eq:|E_n|<<}
	|E_n|\ll|T_n|^2. 
\end{equation}
In this setting, all limit relations are stated for $n\to\infty$. 

For natural $n$ and $k$ and for $\al\in\{0,1\}$, let $H_{n,k}^{(\al)}$ denote the set of all subsets $F$ of $T_n$ of cardinality $|F|=k$ such that exactly $\al$ subsets of $F$ of cardinality $2$ belong to $E_n$.  

Take any real number $a>0$, and let a sequence of real numbers $c_n=c_n(a)$ be such that the following conditions hold: 
\begin{gather}
	\sum_{t\in T_n}\P(X_t\ge c_n)\longrightarrow a; \label{eq:to a} \\
	  \sup_{n,\,t\in T_n}|T_n|\P(X_t\ge c_n)<\infty; \label{eq:sup<} \\ 
	  |E_n|\max_{\{t,s\}\in E_n}\P(X_t\ge c_n,X_s\ge c_n)\longrightarrow0; \label{eq:|E_n|max<<}\\ 
\intertext{for each natural $k$ }	  
	  \sum_{F\in H_{n,k}^{(0)}}\Big[\P\Big(\min_{t\in F}X_t\ge c_n\Big)
	  -\prod_{t\in F}\P(X_t\ge c_n)\Big]\longrightarrow0; \label{eq:H0} \\
\intertext{and for each natural $k$ there is a real number $d_k$ such that for all $F\in H_{n,k}^{(1)}$ }
\P\Big(\min_{t\in F}X_t\ge c_n\Big)
\le d_k \P(X_s\ge c_n,X_r\ge c_n)\prod_{t\in F\setminus\{s,r\}}\P(X_t\ge c_n), \label{eq:H1}	  
\end{gather}
where $\{s,r\}$ is the only subset of $F$ of cardinality $2$ that belongs to $E_n$. 

Then 
\begin{equation}\label{eq:gal}
	\P\Big(\max_{t\in T_n}X_t<c_n\Big)\longrightarrow e^{-a}. 
\end{equation}
\big(The theorem in \cite{galambos72} was stated in terms of (sub)sequences rather than subsets, but the formulation given above is easily seen to be equivalent to that in \cite{galambos72}.\big)  

The four conditions \eqref{eq:|E_n|<<}, \eqref{eq:|E_n|max<<}, \eqref{eq:H0}, and \eqref{eq:H1} specify a notion of weak dependence of the r.v.'s $X_t$. In particular, in the case when the $X_t$'s are independent, it is easy to see that none of these four conditions is needed to deduce \eqref{eq:gal} already from \eqref{eq:to a} and \eqref{eq:sup<}.  

Now we are ready to complete the proof of Theorem~\ref{th:}. Indeed, let $T_n$ and $X_t$ be as described in the beginning of Section~\ref{intro}, so that 
\begin{equation}\label{eq:|T_n|}
	|T_n|=\binom n3\sim\frac{n^3}{3!}  
\end{equation}
and the r.v.'s $X_t$ are exchangeable. 
Next, 
let $E_n$ be the set of subsets $\{t,s\}$ of $T_n$ of cardinality $2$ such that $t\cap s\ne\emptyset$. 

Then $|E_n|\O n^3\cdot n^2$, so that condition \eqref{eq:|E_n|<<} holds. 

Take now indeed any real $a>0$ and let  
\begin{equation}\label{eq:c_n}
	c_n:=c_n(a):=\pi-\ep_n,\quad\text{where}\quad
	\ep_n:=\Big(\frac a{|T_n|\la_d(K)}\Big)^{1/(d-1)},   
\end{equation}
so that $c_n\uparrow\pi$; from now on, all limit relations are stated for $n\to\infty$.  
Then conditions \eqref{eq:to a} and \eqref{eq:sup<} hold by Lemma~\ref{lem:5}. 
Condition \eqref{eq:|E_n|max<<} follows by Lemma~\ref{lem:4} and \eqref{eq:|E_n|<<}. 
Condition \eqref{eq:H0} is trivial here, because for each $F\in H_{n,k}^{(0)}$ the family of r.v.'s $(X_t)_{t\in F}$ is independent. 
Finally, condition \eqref{eq:H1} holds (with $d_k=1$) because for each $F\in H_{n,k}^{(1)}$ and $s,r$ as described in that condition, the family of r.v.'s $(X_t)_{t\in F\setminus\{s,r\}}$ is independent in itself and also independent of the random pair $(X_s,X_r)$. 

Thus, the conclusion \eqref{eq:gal} holds, with $c_n$ as in \eqref{eq:c_n}. In view of \eqref{eq:Y} and \eqref{eq:phi}, 
\begin{equation*}
\P\Big(\max_{t\in T_n}X_t<c_n\Big)=\P\Big(Y_{n,d}\frac{|T_n|}{n^3/3!}>a\Big).  	
\end{equation*} 
So, to finish the proof of Theorem~\ref{th:}, it remains to recall \eqref{eq:|T_n|}. 
\qed

\appendix 

\section{Proof of Proposition~\ref{prop:B,min elong}}\label{min elong}


As mentioned before, this proof is based on the Steiner symmetrization. 

Let $E_d(K)$ denote $\E|P_1P_2|^d$, where, as before, $P_1$ and $P_2$ are random points drawn independently and uniformly from $K$. Let $\bar K$ denote the closure of $K$. Then for any real $\de>0$ one has $K\subseteq \bar K\subseteq (1+\de)K$, whence 
\begin{equation*}
E_d(K)\le E_d(\bar K)\le E_d ((1+\de)K)=
(1+\de)^dE_d(K)\underset{\de\downarrow0}\longrightarrow E_d(K), 
\end{equation*}
so that $E_d(\bar K)=E_d(K)$. Similarly, $\V_d(\bar K)=\V_d(K)$. 
So, wlog the bounded convex set $K$ 
is closed and hence compact, which will be assumed henceforth. 

The Steiner symmetrization can be described as follows. Take any unit vector $u\in\R^d$. For each $x\in\R^d$, there are uniquely determined $\th(x)=\th_u(x)$ in $\R$ and $H(x)=H_u(x)$ in the orthogonal complement $\uperp:=\{y\in\R^d\colon\langle y,u\rangle=0\}$ of the singleton set $\{u\}$ to $\R$ such that 
\begin{equation}\label{eq:th,H}
	x=\th(x)u+H(x). 
\end{equation}
For each $y\in\uperp$, let 
\begin{equation*}
	s(y):=s_{K,u}(y):=\{\al\in\R\colon\al u+y\in K\}. 
\end{equation*}
Let then 
\begin{equation*}
		Y_u:=Y_{K,u}:=\big\{y\in\uperp\colon s(y)\ne\emptyset\big\}. 
\end{equation*}
Clearly, for each $y\in\uperp$, the set $s(y)$ is a compact convex subset of $\R$, and this set is nonempty if $y\in\Y_u$. So, for each $y\in\Y_u$ there exist unique real numbers $a(y)=a_K(y)$ and $b(y)=b_K(y)$ such that $a(y)\le b(y)$ and 
\begin{equation*}
	s(y)=[a(y),b(y)]. 
\end{equation*}
Then the Steiner symmetrization, say $\S_uK$, of $K$ along the unit vector $u$ is the set  
\begin{equation*}
	\S_uK:=\bigcup_{y\in Y_{K,u}} \Big\{\al u+y\colon \frac{a(y)-b(y)}2\le\al\le\frac{b(y)-a(y)}2\Big\}.  
\end{equation*}
It is well known and easy to see that $\S_uK$ is a compact convex set, of the same volume as $K$: 
\begin{equation*}
	\V_d(\S_uK)=\V_d(K). 
\end{equation*}
One may also note that 
that the compact convex set $\S_uK$ is determined by the conditions 
\begin{equation*}
	Y_{\S_uK,u}=Y_{K,u}\quad\text{and}\quad s_{\S_uK,u}(y)=\Big[\frac{a(y)-b(y)}2,\frac{b(y)-a(y)}2\Big]
\end{equation*}
for all $y\in Y_{K,u}$. 

Now we are ready to state 
\begin{lemma}\label{lem:steiner ineq}
$E_d(\S_uK)\le E_d(K)$. 
Moreover, $E_d(\S_uK)=E_d(K)$ if and only if $K$ is a ball. 
\end{lemma}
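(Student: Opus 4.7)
The plan is to decompose each random point along $u$ and $\uperp$ and reduce the functional inequality to a one-dimensional convexity statement conditional on the orthogonal projections.

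First I would write $P_i=\theta_iu+H_i$ as in \eqref{eq:th,H} and observe that the joint distribution of $(H_1,H_2)$ is the same under uniform sampling from $K$ and from $\S_uK$: in both cases the density on $Y_{K,u}\times Y_{K,u}$ is proportional to $\ell(y_1)\ell(y_2)$ with $\ell(y):=b(y)-a(y)$, since Steiner symmetrization preserves every fiber length. Conditionally on $(H_1,H_2)=(y_1,y_2)$, the coordinates $\theta_1,\theta_2$ are independent, uniform on $[a(y_i),b(y_i)]$ in the case of $K$, and uniform on $[-\ell(y_i)/2,\ell(y_i)/2]$ in the case of $\S_uK$. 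Setting $m_i:=\tfrac12(a(y_i)+b(y_i))$, $c:=m_1-m_2$, and letting $\tilde\theta_i$ be independent and uniform on $[-\ell(y_i)/2,\ell(y_i)/2]$, the comparison of $E_d(K)$ and $E_d(\S_uK)$ reduces, pointwise in $(y_1,y_2)$, to the comparison of $\E f(Z+c)$ with $\E f(Z)$, where $Z:=\tilde\theta_1-\tilde\theta_2$ and $f(z):=(z^2+|y_1-y_2|^2)^{d/2}$.

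Second, since $Z$ is symmetric about $0$ and $f$ is even,
\begin{equation*}
\E f(Z+c)=\tfrac12\bigl(\E f(Z+c)+\E f(Z-c)\bigr)\ge\E f(Z),
\end{equation*}
where the inequality follows from convexity of $f$ applied pointwise to the midpoint of $Z+c$ and $Z-c$; one checks $f''(z)=d(z^2+r^2)^{d/2-2}[(d-1)z^2+r^2]\ge0$ with $r:=|y_1-y_2|$, valid for all $d\ge2$. Integrating this pointwise inequality against the common marginal density of $(H_1,H_2)$ gives $E_d(K)\ge E_d(\S_uK)$.

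For the equality case, $f$ is strictly convex on $\R$ whenever $r>0$, and $Z$ has a non-degenerate density whenever $\ell(y_1)\wedge\ell(y_2)>0$, so the pointwise inequality is strict unless $c=m(y_1)-m(y_2)=0$. Hence equality forces $m(\cdot)$ to be essentially constant on $Y_{K,u}$; equivalently, $K$ coincides with $\S_uK$ up to a translation along $u$, so that $K$ is symmetric about some affine hyperplane perpendicular to $u$. The main obstacle is upgrading this one-direction symmetry to the conclusion ``$K$ is a ball'' as stated: a rectangle would be a counterexample if we stopped here. To bridge this gap, in line with the use of the lemma in Proposition~\ref{prop:B,min elong}, I would argue that if $E_d(\S_uK)=E_d(K)$ for every unit direction $u$, then $K$ is symmetric about some affine hyperplane orthogonal to each $u$, and the classical fact that such a convex body must be a Euclidean ball then completes the equality characterization. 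The rigorous treatment of this final step belongs more naturally to the proof of Proposition~\ref{prop:B,min elong}.
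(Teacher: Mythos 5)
Your argument is essentially the paper's: the same fiber decomposition along $u$ and $\uperp$, the same observation that the conditional law of $\theta_j$ given the projections is uniform on the fiber and gets recentered by the symmetrization, and the same reduction to $\E f(Z+c)\ge\E f(Z)$ for symmetric $Z$ and the even convex function $f(z)=(z^2+r^2)^{d/2}$ (the paper packages this as the function $g_{t,z}(c)$ being even and strictly convex, hence increasing on $[0,\infty)$). Your reading of the equality case as requiring $E_d(\S_uK)=E_d(K)$ for \emph{every} direction $u$ is also the reading the paper intends and uses in Proposition~\ref{prop:B,min elong}. The two steps you defer are precisely where the paper's proof does its remaining work, and it does them inside the lemma rather than postponing them: (i) upgrading ``$m(y_1)-m(y_2)=0$ for almost all pairs'' to ``the midpoint function is constant on all of $Y_u$'' is handled by showing $b$ is concave (hence continuous on $\inter Y_u$) and then passing to boundary points of $Y_u$ by a compactness/limit argument; (ii) the ``classical fact'' that a convex body with a hyperplane of symmetry orthogonal to every direction is a ball is proved by first deducing central symmetry (so every symmetry hyperplane passes through the center), then invoking the Cartan--Dieudonn\'e theorem to get invariance under the full orthogonal group. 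Neither omission is a wrong turn, but a complete write-up needs both, since the measure-zero issue in (i) is not automatic and (ii) is the entire content of the equality characterization.
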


\begin{proof}[Proof of Lemma~\ref{lem:steiner ineq}]
For the random points $P_1$ and $P_2$ as before and $j\in\{1,2\}$, in view of \eqref{eq:th,H} one can write 
\begin{equation*}
	P_j=\xi_j u+Q_j,\quad\text{where}\ \xi_j:=\th(P_j) \text{ and } Q_j:=H(P_j). 
\end{equation*}
Clearly, the random pairs $(\xi_1,Q_1)$ and $(\xi_2,Q_2)$ are independent copies of each other. Also, conditionally on $(Q_1,Q_2)$, the r.v.'s $\xi_1$ and $\xi_2$ are independent, and 
for each $j\in\{1,2\}$ the conditional distribution of $\xi_j$ given $(Q_1,Q_2)$ is uniform over the interval $[A_j,B_j]:=[a(Q_j),b(Q_j)]$. 

\rule{0pt}{12pt}Let now $\tP_j$, $\txi_j$, $\tQ_j$, $\tA_j$, $\tB_j$ be defined similarly to 
$P_j$, $\xi_j$, $Q_j$, $A_j$, $B_j$ (respectively), but with $\S_uK$ in place of $K$. 
Note that the random pairs $(Q_1,Q_2)$ and $(\tQ_1,\tQ_2)$ are the same in distribution, and we may and will assume that they are just equal to each other: $(\tQ_1,\tQ_2)=(Q_1,Q_2)$. 
Then, given $(Q_1,Q_2)$, for each $j\in\{1,2\}$ the conditional distribution of $\xi_j$ is the same as that of $\txi_j+\frac{A_j+B_j}2$. So, letting 
\begin{equation*}
	\eta:=\xi_1-\xi_2,\quad
	\teta:=\txi_1-\txi_2,\quad C:=\tfrac{A_1+B_1}2-\tfrac{A_2+B_2}2, 
\end{equation*}
we see that, 
given $(Q_1,Q_2)$, the conditional distribution of $\eta$ is the same as that of $\teta+C$, whereas the conditional distribution of the real valued r.v.\ $\teta$ given $(Q_1,Q_2)$ is symmetric (about $0$). 
%
%
So, introducing $Z:=|Q_1Q_2|=\|Q_1-Q_2\|$ and writing the instance $|P_1P_2|^2=\eta^2+Z^2$ of the Pythagoras theorem, 
we have 
\begin{equation}\label{eq:E=,E=}
	E_d(K)=\E g_{\teta,Z}(|C|)\quad\text{and}\quad E_d(\S_uK)=\E g_{\teta,Z}(0), 
\end{equation}
where 
\begin{equation*}
	g_{t,z}(c):=g_{t,z,d}(c):=\tfrac12\big((t+c)^2+z^2)^{d/2}+\tfrac12\big((-t+c)^2+z^2)^{d/2}. 
\end{equation*}
for real $t,z,c$. 
For each $(t,z,d)\in\R\times(0,\infty)\times[1,\infty)$, the function $g_{t,z}=g_{t,z,d}$ is even and strictly convex on $\R$, and hence strictly increasing on $[0,\infty)$. 
In view of \eqref{eq:E=,E=}, this immediately yields the inequality $E_d(\S_uK)\le E_d(K)$. 

Moreover, it follows that the equality $E_d(\S_uK)=E_d(K)$ is possible only if $C=0$ almost surely. 
By the Fubini theorem, this implies that there is some $y_1\in Y_u$ such that for almost all $y\in Y_u$ one has 
\begin{equation}\label{eq:a+b}
	\frac{a(y)+b(y)}2=
	\tau_u:=\frac{a(y_1)+b(y_1)}2;      
\end{equation}
the dependence of the functions $a$ and $b$ on the unit vector $u$ (and on the set $K$) is implicit here. 

Let us now show that \eqref{eq:a+b} holds for all $y\in Y_u$. Toward this end, note first that the set $Y_u$ is the orthogonal projection of $K$ onto $\uperp$, and so, $Y_u$ is convex and compact. Moreover, for any $y_0$ and $y_1$ in $Y_u$ and any $t\in(0,1)$ one has $b(y_j)u+y_j\in K$ for $j\in\{1,2\}$, whence $$[(1-t)b(y_0)+tb(y_1)]u+(1-t)y_0+ty_1=(1-t)(b(y_1)u+y_1)+t(b(y_2)u+y_2)\in K,$$ 
which yields $(1-t)b(y_0)+tb(y_1)\in s((1-t)y_0+ty_1)$,  
so that $(1-t)b(y_0)+tb(y_1)\le b((1-t)y_0+ty_1)$. This shows that the function $b$ is concave, on $Y_u$, and hence continuous on the interior $\inter Y_u$ of $Y_u$, by a well-known theorem (see e.g.\ \cite[Theorem~10.1]{rocka}). 
It follows that \eqref{eq:a+b} holds for all $y\in\inter Y_u$. 

Next, take any $y\in Y_u$. Take then $x:=b(y)u+y$, so that $x\in K$. By Lemma~\ref{lem:1}, for all $\de\in(0,1)$ one has $\inter K\ni(1-\de)x=(1-\de)b(y)u+(1-\de)y$, so that $(1-\de)y$ is in the orthogonal projection of $\inter K$ onto $\uperp$ and hence in $\inter Y_u$. It follows that 
\begin{equation*}
	\frac{a((1-\de)y)+b((1-\de)y)}2=\tau_u, 
\end{equation*}
for all $\de\in(0,1)$. 
By the compactness of $K$, there are real $a_*$ and $b_*$ and a sequence $(\de_m)$ in $(0,1)$ converging to $0$ such that $b((1-\de_m)y)\to b_*$ and $a((1-\de_m)y)\to a_*$, whence 
$K\ni b((1-\de_m)y)u+(1-\de_m)y\to b_*u+y$. Therefore and because $K$ is closed, we have $b_*u+y\in K$, and so, $b_*\le b(y)$. On the other hand, by the concavity of $b$, $b((1-\de_m)y)\ge(1-\de_m)b(y)+\de_m b(0)\to b(y)$, so that $b_*=\lim_m b((1-\de_m)y)\ge b(y)$. 
Thus, $b(y)=b_*=\lim_m b((1-\de_m)y)$ and, similarly, $a(y)=\lim_m a((1-\de_m)y)$. 
Since $(1-\de_m)y\in\inter Y_u$ and \eqref{eq:a+b} was established for all $y\in\inter Y_u$, it follows that indeed \eqref{eq:a+b} holds for any $y\in Y_u$. 

This means that the set $K$ is symmertic about the plane 
$$\Pi_u:=\{x\in\R^d\colon \langle x,u\rangle=\tau_u\},$$ 
for any unit vector $u\in\R^d$. By translation, wlog $\bigcap_{i=1}^d\Pi_{e_i}=\{0\}$, where $(e_1,\dots,e_d)$ is (say) the standard basis of $\R^d$. Since $K$ is symmertic about each of the ``coordinate'' hyperplanes $\Pi_{e_1},\dots,\Pi_{e_d}$, it is easy to see that $K$ is centrally symmetric about the origin. 

So, taking any unit vector $v\in\R^d$, one has $a(v)+b(v)=0$. It follows that the hyperplane $\Pi_v$ of symmetry of $K$ must pass through the origin. Thus, $K$ is invariant with respect to the reflection in any hyperplane through the origin. On the other hand, by the Cartan--Dieudonn\'e theorem (see e.g.\ \cite{fuller}), 
any orthogonal transformation is the composition of reflections. We conclude that the compact convex set $K$ is invariant with respect to any orthogonal transformation; hence, $K$ is a ball. 

To complete the proof of Lemma~\ref{lem:steiner ineq}, it remains to note that, if $K$ is a ball, then obviously $E_d(\S_uK)=E_d(K)$.  
\end{proof}

\begin{lemma}\label{lem:ineq}
Let $B$ denote the unit ball in $\R^d$, and suppose that $\V_d(K)=\V_d(B)$. Then 
$E_d(K)\ge E_d(B)$. 
\end{lemma}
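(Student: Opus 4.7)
My approach is to iterate the Steiner symmetrization and pass to the limit, using Lemma~\ref{lem:steiner ineq} at every step.

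First, I invoke the classical Steiner--Schwarz fact (see, e.g., \cite{klartag04}) that there exists a sequence of unit vectors $u_1,u_2,\dots$ in $\R^d$ such that the iterated symmetrizations
\[
K_n:=\S_{u_n}\S_{u_{n-1}}\cdots\S_{u_1}K
\]
converge in the Hausdorff metric to a closed Euclidean ball. Since each Steiner symmetrization preserves the $d$-dimensional volume, $\V_d(K_n)=\V_d(K)=\V_d(B)$ for all $n$, so the limiting ball has volume $\V_d(B)$ and, after a translation, can be taken to be $B$ itself.

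Second, applying Lemma~\ref{lem:steiner ineq} iteratively gives the monotone chain
\[
E_d(K)=E_d(K_0)\ge E_d(K_1)\ge E_d(K_2)\ge\cdots\ge E_d(K_n)
\]
for every $n$.

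Third, I need continuity of the functional $E_d$ under Hausdorff convergence of compact convex bodies with the same positive volume. Writing
\[
E_d(L)=\frac1{\V_d(L)^2}\int_{\R^d}\int_{\R^d}\|x-y\|^d\,\ii{x\in L}\,\ii{y\in L}\,\dd x\,\dd y,
\]
Hausdorff convergence $K_n\to B$ of convex bodies implies Lebesgue-almost-everywhere pointwise convergence $\ii{x\in K_n}\to\ii{x\in B}$ (the boundary of $B$ is a null set), and, since all $K_n$ lie eventually inside some fixed bounded region, the integrand is uniformly bounded. By dominated convergence together with $\V_d(K_n)=\V_d(B)$, we get $E_d(K_n)\to E_d(B)$. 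Passing to the limit in the monotone chain from the previous step yields $E_d(K)\ge E_d(B)$, as desired.

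The main obstacle is the first step: choosing a sequence of Steiner symmetrizations that actually drives $K$ to a ball in Hausdorff distance. This is a classical result in convex geometry, so I would simply cite it rather than reprove it. Everything else is either immediate from Lemma~\ref{lem:steiner ineq} or a routine application of dominated convergence.
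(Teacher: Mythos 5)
Your proof is correct and follows essentially the same route as the paper: both rest on Lemma~\ref{lem:steiner ineq} together with the fact (cited from \cite{klartag04}) that iterated Steiner symmetrizations drive $K$ to the ball of the same volume. The only difference is the limiting step: the paper stops after finitely many symmetrizations inside the sandwich $(1-\de)B\subseteq K_m\subseteq(1+\de)B$ and finishes with explicit inclusion/scaling bounds, whereas you pass to an infinite sequence and invoke dominated convergence --- which is fine, since for \emph{convex} bodies Hausdorff convergence does give almost-everywhere convergence of the indicator functions.
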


\begin{proof}[Proof of Lemma~\ref{lem:ineq}]
Take any $\de\in(0,1)$. By a well-known result (see cf.\ \cite[Theorem~1.5]{klartag04}), there exist a natural $m$ and unit vectors $u_1,\dots,u_m$ in $\R^d$ such that 
\begin{equation*}
	(1+\de)B\supseteq K_m:=\S_{u_m}\cdots\S_{u_1}K\supseteq(1-\de)B. 
\end{equation*}
So, in view of Lemma~\ref{lem:steiner ineq}, 
\begin{align*}
E_d(K)\ge E_d(K_m)&\ge E_d((1-\de)B)\frac{\V_d((1-\de)B)}{\V_d(K_m)} \\ 
&\ge E_d((1-\de)B)\frac{\V_d((1-\de)B)}{\V_d((1+\de)B)}
=\frac{(1-\de)^{2d}}{(1+\de)^d} E_d(B),
\end{align*}
for any $\de\in(0,1)$. Letting now $\de\downarrow0$, one immediately obtains Lemma~\ref{lem:ineq}. 
\end{proof}

Now it is easy to finish the proof of Proposition~\ref{prop:B,min elong}. Indeed, by the scaling properties of $E_d(K)=\E|P_1P_2|^d$ and $\V_d(K)$, wlog $\V_d(K)=\V_d(B)$, where $B$ is the unit ball in $\R^d$, as in 
Lemma~\ref{lem:ineq}. From that lemma and the definition \eqref{eq:la} of $\la_d(K)$, it follows immediately that $\la_d(K)\ge\la_d(B)$. 

Suppose now that $\la_d(K)=\la_d(B)$. Then $E_d(K)=E_d(B)$. On the other hand, 
$E_d(K)\ge E_d(\S_uK)$ by Lemma~\ref{lem:steiner ineq}, and $E_d(\S_uK)\ge E_d(B)$ by Lemma~\ref{lem:ineq}, since $\V_d(\S_uK)=\V_d(K)=\V_d(B)$. Thus, $E_d(K)\ge E_d(\S_uK)\ge E_d(B)=E_d(K)$, whence $E_d(\S_uK)=E_d(K)$, so that, again by Lemma~\ref{lem:steiner ineq}, $K$ is a ball. 
This completes the proof of Proposition~\ref{prop:B,min elong}.

\section{Proof of Proposition~\ref{lem:la(B)}}\label{elong}

%

The random pair $(P_1,P_2)$ equals $(R_1U_1,R_2U_2)$ in distribution, where $R_1$ and $R_2$ are real-valued r.v.'s each with density $\R\ni r\mapsto d\,r^{d-1}\ii{0<r<1}$, $U_1$ and $U_2$ are random vectors each uniformly distributed on the unit sphere, and $R_1,U_1,R_2,U_2$ are independent. So, letting $T$ denote the cosine of the angle between the random vectors $U_1$ and $U_2$, we see that the r.v.'s $R_1,R_2,T$ are independent, and 
$|P_1P_2|^2$ equals $R_1^2+R_2^2-2R_1R_2T$ in distribution. Moreover -- because, by the spherical symmetry, $T$ is independent of (say) $U_1$ -- the density of $T$ is $\R\ni t\mapsto C_d\,(1-t^2)^{(d-3)/2}\ii{-1<t<1}$, where 
\begin{equation}\label{eq:C_d}
	C_d:=\frac{\Ga(d/2)}{\sqrt\pi\,\Ga((d-1)/2)}\sim\sqrt{\frac d{2\pi}};  
\end{equation}
all the limit relations in this proof are of course for $d\to\infty$.  
Hence,
\begin{equation}\label{eq:E=}
	\E|P_1P_2|^d=C_d\,d^2 (J_{d,1}+J_{d,2}), 
\end{equation}
where 
\begin{gather*}
J_{d,1}:= \int_{-1}^0\Psi_d(t)(1-t^2)^{(d-3)/2}\dd t,\quad 
J_{d,2}:= \int_0^1\Psi_d(t)(1-t^2)^{(d-3)/2}\dd t, \\ 
	\Psi_d(t):=\int_0^1\dd r_1\int_0^1\dd r_2\;r_1^{d-1}r_2^{d-1}(r_1^2+r_2^2-2r_1r_2t)^{d/2}. 
\end{gather*}

For $t>0$, the integrand in the integral expression for $\Psi_d(t)$ is no greater than $2^{d/2}$, and so, $\Psi_d(t)\le2^{d/2}$. Therefore, for $d\ge3$ 
\begin{equation}\label{eq:J2<}
	J_{d,2}\le2^{d/2}. 
\end{equation}


Let us now estimate $J_{d,1}$. Note that 
\begin{equation}\label{eq:J1,J11,J12}
	J_{d,1,1}\le J_{d,1}\le J_{d,1,1}+2J_{d,1,2}, 
\end{equation}
where 
\begin{align*}
	J_{d,1,j}:=&\int_{-1}^0\Psi_{d,j}(t)(1-t^2)^{(d-3)/2}\dd t, \\ 
	\Psi_{d,1}(t):=&\int_{1-\de}^1\dd r_1\int_{1-\de}^1\dd r_2\;r_1^{d-1}r_2^{d-1}(r_1^2+r_2^2-2r_1r_2t)^{d/2},  \\ 
	\Psi_{d,2}(t):=&\int_0^{1-\de}\dd r_1\int_0^1\dd r_2\;r_1^{d-1}r_2^{d-1}(r_1^2+r_2^2-2r_1r_2t)^{d/2},  \\ 
	\de:=&\de_d:=1/\sqrt{d-1}. 
\end{align*}

For $(r_1,r_2,t)\in
(0,1-\de)\times(0,1)\times(-1,0)$, the integrand $r_1^{d-1}r_2^{d-1}(r_1^2+r_2^2-2r_1r_2t)^{d/2}$ in $\Psi_{d,2}(t)$ does not exceed 
\begin{align*}
&(1-\de)^{d-1}\big((1-\de)^2+1-2(1-\de)t\big)^{d/2}\\ 
&\le (1-\de)^{d-1}(1-t)^{d/2}2^{d/2}\le2^{d/2}e^{-\sqrt{d-1}}(1-t)^{d/2}, 	
\end{align*}
and so, the latter expression is also an upper bound on $\Psi_{d,2}(t)$. It follows that 
\begin{equation}\label{eq:J12<}
	J_{d,1,2}\le 2^{d/2}e^{-\sqrt{d-1}}Q_d, 
\end{equation}
where 
\begin{equation*}
	Q_d:=\int_{-1}^0(1-t)^{d/2}(1-t^2)^{(d-3)/2}\dd t. 
\end{equation*}

Letting $v:=1-r_1$ and $w:=1-r_2$, we have 
\begin{align*}
	r_1^2r_2^2(r_1^2+r_2^2-2r_1r_2t)=&2(1-t)-6(1-t)(v+w)+O(v^2+w^2) \\ 
	=&2(1-t)[1-3(v+w)+O(v^2+w^2)] \\  
	=&2(1-t)\exp\{-3(v+w)+O(v^2+w^2)\} \\    
	=&2(1-t)\exp\{-(3+o(1))(v+w)\}    
\end{align*}
for $(r_1,r_2,t)\in(1-\de,1)\times(1-\de,1)\times(-1,0)$,  
whence 
\begin{align*}
	\Psi_{d,1}(t)\sim & 
	\,2^{(d-1)/2}(1-t)^{(d-1)/2}(1^2+1^2-2\times1\times1\times t)^{1/2} \\ 
	&\times
	\Big(\int_0^\de\dd v\;\exp\{-(3+o(1))v(d-1)/2\}\Big)^2 \\ 
	&\sim \,2^{d/2}(1-t)^{d/2}\frac4{9d^2}
\end{align*}
and 
\begin{equation*}
	J_{d,1,1}\sim 2^{d/2}\frac4{9d^2}\,Q_d.    
\end{equation*}
Comparing this with \eqref{eq:J12<}, we see that $J_{d,1,2}\ll J_{d,1,1}$. Recalling now \eqref{eq:J1,J11,J12}, we conclude that 
\begin{equation}\label{eq:J1 sim}
	J_{d,1}\sim J_{d,1,1}\sim 2^{d/2}\frac4{9d^2}\,Q_d. 
\end{equation}

The needed estimation of $Q_d$ is straightforward. Indeed, 
\begin{equation}\label{eq:Q_d=}
	Q_d=\int_{-1}^0(1-t)^{3/2}\exp\Big\{\frac{d-3}2\,\psi(t)\Big\}\dd t, 
\end{equation}
where $\psi(t):=2\ln(1-t)+\ln(1+t)$. One has $\psi'(t)=-\frac{1+3t}{1-t^2}$; $\psi''(t)=-\frac1{(1+t)^2}-\frac2{(1-t)^2}<0$; $\psi'(t)=0\iff t=t_*:=-\frac13$; $\exp\{\psi(t_*)\}=\frac{32}{27}$; and $\psi''(t_*)=-\frac{27}8$. 
So, standard reasoning yields 
\begin{align*}
	Q_d&\sim \int_{-1}^0(1-t_*)^{3/2}
	\exp\Big\{\frac{d-3}2\,\Big[\psi(t_*)+\frac12\psi''(t_*)(t-t_*)^2\Big]\Big\}\dd t \\ 
	&\sim (1-t_*)^{3/2}\exp\Big\{\frac{d-3}2\,\psi(t_*)\Big\}\frac{\sqrt{2\pi}}{\sqrt{-\psi''(t_*)d/2}}
	=\sqrt\pi\,\frac{3\sqrt3}{4\sqrt{d}}\,\Big(\frac{32}{27}\Big)^{d/2}.  
\end{align*}
Hence, by \eqref{eq:J1 sim},
\begin{equation*}
	J_{d,1}\sim 2^{d/2}\frac4{9d^2}\,\sqrt\pi\,\frac{3\sqrt3}{4\sqrt{d}}\,\Big(\frac{32}{27}\Big)^{d/2}
	=\frac{\sqrt\pi\,}{d^2\sqrt{3d}}\,\Big(\frac{64}{27}\Big)^{d/2}. 
\end{equation*} 
Comparing this with \eqref{eq:J2<}, we see that $J_{d,2}\ll J_{d,1}$. Now 
\eqref{eq:E=} and \eqref{eq:C_d} yield 
\begin{equation*}
	\E|P_1P_2|^d\sim C_d\,d^2 J_{d,1}
	\sim C_d\frac{\sqrt\pi\,}{\sqrt{3d}}\,\Big(\frac{64}{27}\Big)^{d/2}
		\sim
		\frac1{\sqrt{6}}\,\Big(\frac{8}{3\sqrt3}\Big)^d,   
\end{equation*}
which proves the first asymptotic equivalence in \eqref{eq:la(B)}. 
The second asymptotic equivalence there now follows immediately by \eqref{eq:la}, \eqref{eq:rho}, the first equality in \eqref{eq:be,...} (with $d-1$ replaced by $d$), and the asymptotic equivalence $\Ga(\al+1/2)\sim\sqrt\al\,\Ga(\al)$ as $\al\to\infty$.  \qed 





\bibliographystyle{abbrv}

\bibliography{C:/Users/ipinelis/Dropbox/mtu/bib_files/citations12.13.12}


\end{document}